\newtheorem{theorem}{Theorem}[section]
\newtheorem{lemma}[theorem]{Lemma}
\newtheorem{remark}[theorem]{Remark}
\newtheorem{step}{Step}
\numberwithin{equation}{section}
\numberwithin{equation}{section}
\newcommand{\RR}{\mathbb R}
\newcommand{\NN}{\mathbb N}
\renewcommand{\le}{\leqslant}
\renewcommand{\leq}{\leqslant}
\renewcommand{\ge}{\geqslant}
\renewcommand{\geq}{\geqslant}
\begin{document}

\title[Multiplicity results]{Multiplicity results for fractional Laplace problems with critical growth}

\thanks{The first author was supported by {\it Coordena\c c\~ao de Aperfei\c conamento de pessoal de n\'ivel superior} (CAPES) through the fellowship 33003017003P5--PNPD20131750--UNICAMP/MATEM\'ATICA. The second and the third author were supported by the INdAM-GNAMPA Project 2016 {\it Problemi variazionali su variet\`a riemanniane e gruppi di Carnot}, by the DiSBeF Research Project 2015 {\it Fenomeni non-locali: modelli e applicazioni} and by the DiSPeA Research Project 2016 {\it Implementazione e testing di modelli di fonti energetiche ambientali per reti di sensori senza fili autoalimentate}. The third author was supported by the ERC grant $\epsilon$ ({\it Elliptic Pde's and Symmetry of Interfaces and Layers for Odd Nonlinearities}).}

\author[A. Fiscella]{Alessio Fiscella}
\address{Departamento de Matem\'atica, Universidade Estadual de Campinas, IMECC,
Rua S\'ergio Buarque de Holanda 651, SP CEP 13083--859 Campinas, BRAZIL}
\email{\tt fiscella@ime.unicamp.br}

\author[G. Molica Bisci]{Giovanni Molica Bisci}
\address{Dipartimento PAU,
          Universit\`a `Mediterranea' di Reggio Calabria,
          Via Melissari 24, 89124 Reggio Calabria, Italy}
\email{\tt gmolica@unirc.it}

\author[R. Servadei]{Raffaella Servadei}
\address{Dipartimento di Scienze Pure e Applicate (DiSPeA), Universita degli Studi di Urbino
`Carlo Bo', Piazza della Repubblica 13, 61029 Urbino (Pesaro e Urbino), Italy}
\email{\tt raffaella.servadei@uniurb.it}

\keywords{Fractional Laplacian, critical nonlinearities, best fractional critical Sobolev constant, variational techniques, integrodifferential
operators.\\
\phantom{aa} 2010 AMS Subject Classification: Primary:
49J35, 35A15, 35S15; Secondary: 47G20, 45G05.}


\begin{abstract}
This paper deals with multiplicity and bifurcation results for nonlinear problems driven by the fractional Laplace operator $(-\Delta)^s$ and involving a critical Sobolev term. In particular, we consider
$$\left\{
\begin{array}{ll}
(-\Delta)^su=\gamma\left|u\right|^{2^*-2}u+f(x,u) &  \mbox{in } \Omega\\
u=0 & \mbox{in } \RR^n\setminus \Omega,
\end{array}\right.$$
where $\Omega\subset\mathbb R^n$ is an open bounded set with continuous boundary, $n>2s$ with $s\in(0,1)$, $\gamma$ is a positive real parameter, $2^*=2n/(n-2s)$ is the fractional critical Sobolev exponent and $f$ is a Carath\'{e}odory function satisfying different subcritical conditions.

\end{abstract}

\maketitle

\section{Introduction}
Recently, the interest towards nonlocal fractional Laplacian equations involving a critical term has grown more and more.
Concerning the existence result for this kind of problems, a positive answer has been given in the recent papers \cite{colorado2, fv, ms, sY, servadeivaldinociBN, servadeivaldinociBNLOW}: in all these works well known existence results for classical Laplace operators were extended to the nonlocal fractional setting. A natural question is to ask when it is possible to get more than a non--trivial solution, giving a multiplicity result. In literature few attempts have been made to answer this question. In particular we refer to very recent papers \cite{fms, pereira} which give a bifurcation result.

Motivated by the above papers, here we deal with the following problem
\begin{equation}\label{P}
\left\{\begin{array}{ll}
(-\Delta)^su=\gamma\left|u\right|^{2^*-2}u+f(x,u) & \mbox{in } \Omega\\
u=0 & \mbox{in } \mathbb{R}^{n}\setminus\Omega,
\end{array}
\right.
\end{equation}
where $s\in(0,1)$ is fixed, $n>2s$, $\Omega\subset\mathbb{R}^{n}$ is an open and bounded set with continuous boundary, $2^*=2n/(n-2s)$ and $(-\Delta)^s$ is the fractional Laplace operator, that may be defined (up to a normalizing constant) by the Riesz potential as follows
\begin{equation}\label{delta}
\mathcal (-\Delta)^su(x)=\int_{\mathbb{R}^{n}}\frac{2u(x)-u(x+y)-u(x-y)}{\left|y\right|^{n+2s}}dy,\quad x\in\mathbb R^n\,,
\end{equation}
as defined in \cite{valpal} (see this paper and the references therein for further details on fractional Laplacian).

Concerning the nonlinearity in \eqref{P}, in the present work we assume that $f:\Omega\times\mathbb R\rightarrow\mathbb R$ is a {\em Carath\'{e}odory function} satisfying the following condition
\begin{equation}\label{f1}
\sup\Big\{\left|f(x,t)\right|:\;\;x\in\Omega,\;\;\left|t\right|\leq M\Big\}<+\infty \textit{ for any }M>0.
\end{equation}

The main aim of the present paper is to establish bifurcation results for $\eqref{P}$. For this, we need that $f(x,t)$ is odd in $t$, i.e.
\begin{equation}\label{odd}
f(x,t)=-f(x,-t)\quad \mbox{for any}\,\,\, t\in \RR \mbox{ and a.e.}\,\,\,x\in \Omega\,,
\end{equation}
in order to apply the symmetric version of the Mountain Pass Theorem due to Ambrosetti and Rabinowitz (see \cite{ar}). However, with respect to the classical case presented in \cite{ar}, we use a weaker condition than the usual one of Ambrosetti-Rabinowitz, in order to overcome the lack of compactness at critical level $L^{2^*}(\Omega)$. Thus, we assume that $f$ and its primitive $F$, defined as
\begin{equation}\label{F}
F(x,t)=\int^t_0f(x,\tau)d\tau\,,
\end{equation}
satisfy
\begin{equation}\label{f2}
\lim_{\left|t\right|\rightarrow +\infty}\frac{f(x,t)}{\left|t\right|^{2^*-1}}=0 \textit{ uniformly a.e. in }\Omega;
\end{equation}
\begin{equation}\label{f3}
\begin{gathered}\vspace{-.2cm}
\textit{there exist } \sigma\in[0,2)\textit{ and }a_1, a_2>0\textit{ such that}\\
\frac{1}{2}\,f(x,t)t-F(x,t)\geq-a_1-a_2\left|t\right|^\sigma \textit{ for any }t\in\mathbb{R} \textit{ and a.e. } x\in\Omega;\end{gathered}
\end{equation}
\begin{equation}\label{f4}
\begin{gathered}\vspace{-.2cm}
\textit{there exist } \theta\in(2,2^*)\textit{ and }b_1, b_2>0\textit{ such that}\\
F(x,t)\leq b_1\left|t\right|^\theta+b_2 \textit{ for any }t\in\mathbb{R}\textit{ and a.e. }x\in\Omega;
\end{gathered}
\end{equation}
\begin{equation}\label{f5}
\begin{gathered}\vspace{-.2cm}
\textit{there exist } c_1>0, h_1\in L^1(\Omega)\textit{ and }\Omega_0\subset\Omega\textit{ with }\left|\Omega_0\right|>0\textit{ such that}\\
F(x,t)\geq -h_1(x)\left|t\right|^2-c_1 \textit{ for any }t\in\mathbb{R}\textit{ and a.e. }x\in\Omega\textit{ and}\\
\liminf_{\left|t\right|\rightarrow +\infty}\frac{F(x,t)}{\left|t\right|^2}=+\infty \textit{ uniformly a.e. in }\Omega_0.
\end{gathered}
\end{equation}
We are now ready to state our first result.

\begin{theorem}\label{a}
Let $s\in (0,1)$, $n>2s$, $\Omega$ be an open bounded subset of $\RR^n$ with continuous boundary, and let $f$ be a function satisfying assumptions \eqref{f1}, \eqref{odd}, \eqref{f2}, \eqref{f3}--\eqref{f5}.

Then, for any $k\in\mathbb{N}$ there exists $\gamma_k\in(0,+\infty]$ such that \eqref{P} admits at least $k$ pairs of non--trivial solutions for any $\gamma\in(0,\gamma_k)$.
\end{theorem}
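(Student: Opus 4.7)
The plan is to apply the $\mathbb{Z}_2$-equivariant Mountain Pass Theorem of \cite{ar} to the energy functional associated with \eqref{P}. I would work in the fractional Sobolev space $X_0=\{u\in H^s(\RR^n):u=0\text{ a.e.\ in } \RR^n\setminus\Omega\}$ endowed with the Gagliardo norm, and set
\[J_\gamma(u)=\tfrac12\|u\|_{X_0}^2-\frac{\gamma}{2^*}\int_\Omega|u|^{2^*}\,dx-\int_\Omega F(x,u)\,dx.\]
Under \eqref{f1}--\eqref{f2} one has $J_\gamma\in C^1(X_0)$, its critical points are weak solutions of \eqref{P}, and by \eqref{odd} the functional is even. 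Minimax values of genus type will be shown to be critical values lying below the first level where compactness fails, provided $\gamma$ is small enough for each fixed $k$.

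First, establish a local Palais--Smale condition. For a PS sequence $\{u_m\}$ at level $c$, compute $J_\gamma(u_m)-\tfrac12\langle J_\gamma'(u_m),u_m\rangle$: using \eqref{f3}, the continuous embedding $X_0\hookrightarrow L^\sigma(\Omega)$ with $\sigma<2$, and Young's inequality, one bounds $\|u_m\|_{X_0}$. Then a fractional concentration--compactness/Brezis--Nirenberg analysis of the sort carried out in \cite{servadeivaldinociBN, sY} shows that any defect of mass comes in quanta of size at least $\tfrac{s}{n}\,S_s^{n/(2s)}\,\gamma^{-(n-2s)/(2s)}$, so $(PS)_c$ holds for every
\[ c<c^*_\gamma:=\frac{s}{n}\,S_s^{n/(2s)}\,\gamma^{-(n-2s)/(2s)}, \]
where $S_s$ denotes the best fractional critical Sobolev constant.

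Second, set up the symmetric minimax scheme. By \eqref{f1}--\eqref{f2} and the fractional Sobolev inequality, $J_\gamma$ has the usual mountain--pass geometry near the origin, while \eqref{f4} with $\theta<2^*$ guarantees that $J_\gamma\to-\infty$ along any finite--dimensional subspace of $X_0$. I would pick an increasing family $W_1\subset W_2\subset\cdots$ with $\dim W_k=k$ (for instance generated by eigenfunctions of $(-\Delta)^s$ in $X_0$) and define the genus--based minimax values $c_{\gamma,k}$ precisely as in the symmetric version of the Mountain Pass Theorem of \cite{ar}. The abstract theorem then yields that each $c_{\gamma,k}$ is a critical value of $J_\gamma$ whenever $c_{\gamma,k}<c^*_\gamma$, giving distinct pairs of non--trivial critical points for distinct values of $k$, with extra pairs produced by the Krasnosel'ski\u\i{} genus when coincidences occur.

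The main obstacle is the uniform--in--$\gamma$ upper estimate $c_{\gamma,k}\le M_k$ with $M_k$ independent of $\gamma\in(0,1]$. Here \eqref{f5} plays the decisive role: its global lower bound $F(x,t)\ge-h_1(x)|t|^2-c_1$ forbids $F$ from contributing below a quadratic order, while the superquadratic behaviour on $\Omega_0$ forces, together with \eqref{f4} and the equivalence of all norms on $W_k$, the map $J_\gamma|_{W_k}$ to attain its supremum on a ball whose radius is bounded independently of $\gamma$; this gives $c_{\gamma,k}\le M_k$. Since $c^*_\gamma\to+\infty$ as $\gamma\to 0^+$, one then chooses $\gamma_k\in(0,+\infty]$ with $c^*_\gamma>M_k$ on $(0,\gamma_k)$, and the $k$ pairs of non--trivial solutions follow. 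The delicate point is that the weaker condition \eqref{f3} (replacing the Ambrosetti--Rabinowitz condition) has to be strong enough both to guarantee the boundedness of PS sequences and to be compatible with the upper estimate coming from \eqref{f4}--\eqref{f5}; this is exactly why $\sigma<2$ and why the super--quadratic lower bound in \eqref{f5} is only required on $\Omega_0$.
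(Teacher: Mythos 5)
Your overall architecture (even functional, a $\gamma$--dependent compactness threshold that blows up as $\gamma\to0^+$, uniform--in--$\gamma$ upper bounds for the symmetric minimax levels via \eqref{f4}--\eqref{f5}, and then choosing $\gamma_k$ so that the levels fall below the threshold) is the right one and matches the paper. However, there is a genuine gap in your verification of the geometry: you assert that \eqref{f1}--\eqref{f2} and the fractional Sobolev inequality give ``the usual mountain--pass geometry near the origin''. They do not. The hypotheses of Theorem~\ref{a} impose no smallness of $F(x,\cdot)$ at $t=0$; condition \eqref{f4} only gives $F(x,t)\le b_1|t|^\theta+b_2$ with an additive constant $b_2>0$. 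For instance, $f(x,t)=2\Lambda t+\theta|t|^{\theta-2}t$ with $\Lambda>\lambda_1/2$ satisfies every hypothesis of Theorem~\ref{a}, yet
$$\mathcal J_\gamma(te_1)\le \frac{t^2}{2}\big(\lambda_1-2\Lambda\big)\left\|e_1\right\|_2^2<0\quad\mbox{for every }t\neq0,$$
so $\mathcal J_\gamma$ is negative somewhere on \emph{every} sphere of $X_0^s(\Omega)$, the genus--based minimax values computed over the whole space can be nonpositive, and the version of the symmetric Mountain Pass Theorem you invoke does not apply.

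The paper's fix is precisely the content of Lemma~\ref{lem4.1} and Lemma~\ref{lem4.2}: one restricts to the finite--codimensional subspace $\mathbb{P}_{j+1}$, on which the embedding constant of $X_0^s(\Omega)$ into $L^\theta(\Omega)$ can be made arbitrarily small by taking $j$ large, and verifies the lower bound $(I_2)$ on a sphere of \emph{large} radius $\rho$ (large enough to absorb $b_2|\Omega|$ and any quadratic behaviour of $F$ near the origin), with $\gamma<\widetilde\gamma$ small so that the critical term $\gamma c\rho^{2^*}$ is also absorbed; correspondingly one must take $\dim W=k+j$ (not $k$) to still obtain $k$ pairs, since the abstract theorem yields $\dim W-\operatorname{codim}X$ pairs with $X=\mathbb{P}_{j+1}$ of codimension $j$. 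Your proposal contains neither the finite--codimension reduction nor the improved embedding, and without them the argument fails. A secondary, reparable inaccuracy: with \eqref{f3} replacing the Ambrosetti--Rabinowitz condition, the weak limit $u$ of a $(PS)_c$ sequence only satisfies $\mathcal J_\gamma(u)\ge-A$ with $A$ depending on $a_1,a_2,\sigma,|\Omega|$, so your threshold $c^*_\gamma$ must be shifted by such a constant (the paper instead fixes $M$ first and chooses $\gamma^*$ in terms of $M+A$); this does not affect the scheme, since the corrected threshold still tends to $+\infty$ as $\gamma\to0^+$.
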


In the next result we establish a multiplicity result of solutions for \eqref{P} without assuming that primitive $F$ still satisfies a general subcritical growth like in \eqref{f4}. However, we need a stronger condition than \eqref{f5}. That is, given $j, k\in\mathbb{N}$ with $j\leq k$, we consider these different versions of \eqref{f4} and \eqref{f5}
\begin{equation}\label{ft5}
\begin{gathered}\vspace{-.2cm}
\textit{there exists a measurable function }a:\Omega\rightarrow\mathbb R\textit{ such that}\\
\limsup_{t\rightarrow0}2\frac{F(x,t)}{\left|t\right|^2}=a(x)\textit{ uniformly a.e. in }\Omega,\\
a(x)\leq\lambda_j\textit{ a.e. in }\Omega\textit{ and }a(x)<\lambda_j\textit{ on a set of positive measure contained in } \Omega;
\end{gathered}
\end{equation}
\begin{equation}\label{ft4}
\begin{gathered}\vspace{-.2cm}
\textit{there exists } B>0\textit{ such that}\\
F(x,t)\geq \lambda_k\frac{\left|t\right|^2}{2}-B \textit{ for any }t\in\mathbb{R}\textit{ and a.e. }x\in\Omega,\end{gathered}
\end{equation}
where $\lambda_j\leq\lambda_k$ are eigenvalues of $(-\Delta)^s$, as recalled in Section \ref{sec main}.

With the above conditions we still can apply the Mountain Pass Theorem given in \cite{ar}, getting the following result:
\begin{theorem}\label{b}
Let $s\in (0,1)$, $n>2s$, $\Omega$ be an open bounded subset of $\RR^n$ with continuous boundary. Let $j$, $k\in\mathbb{N}$, with $j\leq k$, and let $f$ be a function satisfying assumptions \eqref{f1}, \eqref{odd}, \eqref{f2}, \eqref{f3}, \eqref{ft5} and \eqref{ft4}.

Then, there exists $\gamma_{k,j}\in(0,+\infty]$ such that \eqref{P} admits at least $k-j+1$ pairs of non--trivial solutions for any $\gamma\in(0,\gamma_{k,j})$.
\end{theorem}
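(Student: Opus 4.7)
The plan is to adapt the variational strategy of Theorem~\ref{a} by replacing the mountain pass geometry coming from \eqref{f4}--\eqref{f5} with a linking between two subspaces of $X_0$ spanned by eigenfunctions of $(-\Delta)^s$, and then to apply the $\mathbb Z_2$-equivariant critical point theorem of \cite{ar} to the even $C^1$ functional
$$J_\gamma(u)=\frac{1}{2}\|u\|_{X_0}^2-\frac{\gamma}{2^*}\int_\Omega|u|^{2^*}\,dx-\int_\Omega F(x,u)\,dx.$$
Denote by $\{e_i\}_{i\in\NN}$ an $L^2(\Omega)$-orthonormal system of eigenfunctions of $(-\Delta)^s$ with Dirichlet condition outside $\Omega$ and by $\{\lambda_i\}_{i\in\NN}$ the corresponding eigenvalues, and set $\mathbb H_k:=\mathrm{span}\{e_1,\dots,e_k\}$ and $\mathbb P_j:=\overline{\mathrm{span}\{e_i:i\geq j\}}^{\,X_0}$, so that $\dim\mathbb H_k=k$, $\mathrm{codim}\,\mathbb P_j=j-1$ and $\dim(\mathbb H_k\cap\mathbb P_j)=k-j+1$.

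\emph{Geometry.} On $\mathbb P_j$ one has $\|u\|_{X_0}^2\geq\lambda_j\|u\|_{L^2(\Omega)}^2$. Combining \eqref{ft5} with \eqref{f1}--\eqref{f2} yields, for every $\varepsilon>0$, a constant $C_\varepsilon>0$ such that $F(x,t)\leq\frac{a(x)+\varepsilon}{2}\,t^2+C_\varepsilon|t|^{2^*}$. The strict pointwise inequality $a(x)<\lambda_j$ on a positive-measure set, together with the compactness of the embedding $X_0\hookrightarrow L^2(\Omega)$ and the variational characterization of $\lambda_j$, promotes this into a uniform spectral gap $\|u\|_{X_0}^2-\int_\Omega a(x)u^2\geq c_0\|u\|_{X_0}^2$ for every $u\in\mathbb P_j$. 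Choosing $\rho$ small and then $\varepsilon$ small produces $\alpha>0$ such that $J_\gamma(u)\geq\alpha$ for every $u\in\mathbb P_j$ with $\|u\|_{X_0}=\rho$, uniformly in $\gamma$ in an initial interval. On the finite-dimensional subspace $\mathbb H_k$ one has $\|u\|_{X_0}^2\leq\lambda_k\|u\|_{L^2(\Omega)}^2$, so \eqref{ft4} immediately yields
$$J_\gamma(u)\leq B|\Omega|-\frac{\gamma}{2^*}\|u\|_{L^{2^*}(\Omega)}^{2^*}\qquad\text{for every }u\in\mathbb H_k;$$
by equivalence of norms on $\mathbb H_k$, this bounds $J_\gamma|_{\mathbb H_k}$ uniformly above by some $\beta(\gamma)<\infty$ and forces $J_\gamma(u)\to-\infty$ as $\|u\|_{X_0}\to\infty$ in $\mathbb H_k$.

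\emph{Compactness and conclusion.} As in the proof of Theorem~\ref{a}, \eqref{f3} forces every Palais--Smale sequence to be bounded, and the standard concentration--compactness scheme for the fractional critical embedding yields the $(PS)_c$ condition for every $c<c^\ast_\gamma:=\frac{s}{n}\bigl(S/\gamma^{(n-2s)/(2s)}\bigr)^{n/(2s)}$, where $S$ is the best fractional critical Sobolev constant. Since $c^\ast_\gamma\to+\infty$ as $\gamma\to 0^+$ while $\beta(\gamma)$ remains bounded in the same limit, one can choose $\gamma_{k,j}\in(0,+\infty]$ so that $\beta(\gamma)<c^\ast_\gamma$ for every $\gamma\in(0,\gamma_{k,j})$. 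The symmetric Mountain Pass Theorem applied to the even functional $J_\gamma$ with the linking between $\mathbb H_k$ and the small sphere in $\mathbb P_j$ then produces at least $\dim\mathbb H_k-\mathrm{codim}\,\mathbb P_j=k-j+1$ pairs of critical points at levels in $[\alpha,\beta(\gamma)]\subset(0,c^\ast_\gamma)$, which are non-trivial because $J_\gamma(0)=0<\alpha$.

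The main obstacle is to turn the purely qualitative information in \eqref{ft5}--\eqref{ft4} into the quantitative linking estimates above, uniformly in $\gamma$ ranging in an interval $(0,\gamma_{k,j})$. In particular, the upgrade of the pointwise strict inequality $a(x)<\lambda_j$ on a positive-measure set to a genuine spectral gap $c_0>0$ on $\mathbb P_j$ is the delicate point, and must be combined with a careful absorption of the critical remainder $C_\varepsilon|t|^{2^*}$ into that gap for $\rho$ and $\gamma$ sufficiently small; by contrast, once the geometry and the compactness threshold are in place, the construction of the $k-j+1$ pairs is an application of the same symmetric critical point theorem already used for Theorem~\ref{a}.
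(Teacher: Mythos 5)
Your proposal is correct and follows essentially the same route as the paper: the decomposition $V=\mathbb H_{j-1}$, $X=\mathbb P_j$, $W=\mathbb H_k$, the spectral-gap lemma upgrading $a(x)\leq\lambda_j$ with strict inequality on a positive-measure set to $\|u\|^2-\int_\Omega a\,|u|^2\geq c_0\|u\|^2$ on $\mathbb P_j$ (the paper's Lemma~\ref{lemtec}), the uniform bound $B|\Omega|$ on $\mathbb H_k$ from \eqref{ft4}, and the Palais--Smale condition below a level that improves as $\gamma\to0^+$ are exactly the ingredients of the paper's proof via Theorem~\ref{abs}. The only cosmetic difference is your explicit formula for the compactness threshold $c^*_\gamma$, which is the classical Ambrosetti--Rabinowitz-type one; under the weaker condition \eqref{f3} the paper instead fixes $M=B|\Omega|$ first and shrinks $\gamma^*$ accordingly (Lemma~\ref{palais}), but this changes nothing in the logic.
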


A natural question is to investigate what happens when $f$ has not any symmetry. In this case it is still possible to get a multiplicity result, by studying two truncated problems related to \eqref{P}. These auxiliary problems are still variational and by using the Mountain Pass Theorem we get at least two solutions of different sign for them, as stated in the following result:
\begin{theorem}\label{c}
Let $s\in (0,1)$, $n>2s$, $\Omega$ be an open bounded subset of $\RR^n$ with continuous boundary. Let $f$ satisfy $f(x,0)=0$, \eqref{f1}, \eqref{f2}, \eqref{f3}, \eqref{ft5} and \eqref{ft4} with $j=k=1$.

Then, there exists $\gamma_1>0$ such that \eqref{P} admits a non--trivial non--negative and a non--trivial non--positive solution for any $\gamma\in(0,\gamma_1)$.
\end{theorem}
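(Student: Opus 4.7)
The plan is to construct two truncated variational problems whose critical points are automatically sign-constant and then apply the Mountain Pass Theorem of \cite{ar} to each. Define
\begin{equation*}
f_+(x,t)=\begin{cases} f(x,t), & t\geq 0\\ 0, & t<0\end{cases}\qquad f_-(x,t)=\begin{cases} 0, & t\geq 0\\ f(x,t), & t<0\end{cases}
\end{equation*}
with primitives $F_\pm$, and consider the truncated problems obtained from \eqref{P} by replacing $f$ with $f_\pm$ and the critical term $|u|^{2^*-2}u$ with $(u^+)^{2^*-1}$, respectively $-(u^-)^{2^*-1}$. The associated functionals on the natural fractional Sobolev space $X_0$ read
\begin{equation*}
\mathcal{J}_\pm(u)=\frac{1}{2}\|u\|_{X_0}^{2}-\frac{\gamma}{2^*}\int_\Omega (u^\pm)^{2^*}\,dx-\int_\Omega F_\pm(x,u)\,dx.
\end{equation*}

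Next, I would verify that $\mathcal{J}_+$ (and symmetrically $\mathcal{J}_-$) exhibits Mountain Pass geometry around $0$. Combining \eqref{ft5} for $j=1$ with \eqref{f2}, for every $\varepsilon>0$ there is $C_\varepsilon>0$ such that $F_+(x,t)\leq \tfrac{1}{2}(a(x)+\varepsilon)t^2+C_\varepsilon|t|^{2^*}$; since $a(x)\leq\lambda_1$ with strict inequality on a set of positive measure, the variational characterization of $\lambda_1$ yields a uniform spectral gap which, combined with the fractional critical Sobolev embedding, gives $\mathcal{J}_+(u)\geq\alpha>0$ on a small sphere $\{\|u\|_{X_0}=r\}$, provided $\gamma$ is small enough. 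For the descent direction one uses \eqref{ft4} with $k=1$: integrating the bound $F(x,t)\geq \tfrac{\lambda_1}{2}t^2-B$ over $\Omega$ shows that $\mathcal{J}_+(tv_0)\to-\infty$ as $t\to+\infty$ along $v_0=e_1$, the first (positive) eigenfunction of $(-\Delta)^s$ in $\Omega$.

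The main technical obstacle is the Palais--Smale condition at the mountain pass level. Boundedness of any Palais--Smale sequence $(u_n)\subset X_0$ follows from the identity $\mathcal{J}_+(u_n)-\tfrac{1}{2}\langle\mathcal{J}_+'(u_n),u_n\rangle$ combined with \eqref{f3}, since the exponent $\sigma<2$ ensures that the remainder can be absorbed. Up to a subsequence $u_n\rightharpoonup u_+$ in $X_0$ and pointwise a.e., and a Brezis--Lieb/concentration-compactness analysis in the spirit of \cite{servadeivaldinociBN,fms} yields strong convergence whenever the level $c$ satisfies $c<\tfrac{s}{n}S^{n/(2s)}\gamma^{-(n-2s)/(2s)}$, where $S$ is the best fractional Sobolev constant. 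Since the mountain pass value admits an upper bound along a fixed path that is uniform in $\gamma$, one selects $\gamma_1>0$ so small that this threshold is met for every $\gamma\in(0,\gamma_1)$.

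Finally, the Mountain Pass Theorem yields a critical point $u_+\in X_0$ of $\mathcal{J}_+$ with $\mathcal{J}_+(u_+)\geq\alpha>0$, so in particular $u_+\not\equiv 0$. Testing $\mathcal{J}_+'(u_+)=0$ against $u_+^-:=\min\{u_+,0\}$ and exploiting the pointwise inequality $(u_+(x)-u_+(y))(u_+^-(x)-u_+^-(y))\geq (u_+^-(x)-u_+^-(y))^2$, together with $f_+(x,u_+)\,u_+^-=0$ and $(u_+^+)^{2^*-1}u_+^-=0$, one obtains $\|u_+^-\|_{X_0}^2\leq 0$; hence $u_+\geq 0$ a.e.\ in $\Omega$. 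Since $f(x,0)=0$, this forces $f_+(x,u_+)=f(x,u_+)$ and $(u_+)^{2^*-1}=|u_+|^{2^*-2}u_+$, so $u_+$ is a non-trivial non-negative solution of \eqref{P}. A symmetric argument applied to $\mathcal{J}_-$ delivers a non-trivial non-positive solution $u_-$.
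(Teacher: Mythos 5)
Your proposal follows essentially the same route as the paper: truncate the nonlinearity and the critical term, verify the mountain pass geometry near $0$ via \eqref{ft5} (spectral gap below $\lambda_1$) and the unbounded direction along the positive first eigenfunction via \eqref{ft4}, establish the Palais--Smale condition below a level that can be made uniform in $\gamma$ by shrinking $\gamma$, and conclude with the Mountain Pass Theorem. The only difference is that you explicitly justify the sign of the critical point by testing $\mathcal J_+'(u_+)=0$ against $u_+^-$ and using the pointwise inequality $(u_+(x)-u_+(y))(u_+^-(x)-u_+^-(y))\geq (u_+^-(x)-u_+^-(y))^2$, a step the paper leaves implicit; this is a correct and welcome addition.
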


The main tools used in order to prove Theorem~\ref{a}--Theorem~\ref{c} are variational and topological methods and a suitable decomposition of the functional space $X_0^s(\Omega)$ where we look for solutions of problem~\eqref{P}, through the eigenvalues of the fractional Laplace operator.

An interesting open problem is to prove the main results of the present paper in a more general framework, like the one given in the following problem:
\begin{equation}\label{Pgen}
\left\{\begin{array}{ll}
-\mathcal L^p_K u=\gamma\left|u\right|^{p^*-2}u+f(x,u) & \mbox{in } \Omega\\
u=0 & \mbox{in } \mathbb{R}^{n}\setminus\Omega\,,
\end{array}
\right.
\end{equation}
where $\Omega\subset\mathbb R^n$ is an open and bounded set with continuous boundary, $n>ps\geq 2s$, $p^*=pn/(n-ps)$ and $\mathcal L^p_K$ is a nonlocal operator defined as follows:
$$\mathcal L^p_K u(x)=2\lim_{\varepsilon\searrow0}\int_{\mathbb{R}^{n}\setminus B_{\varepsilon}(x)}|u(x)-u(y)|^{p-2}(u(x)-u(y))K(x-y)dy,\quad x\in\mathbb R^n\,.$$
Here, the kernel $K:\mathbb{R}^{n}\setminus \left\{0\right\}\rightarrow(0,+\infty)$ is a
{\em measurable function} for which
\begin{equation}\label{K1}
mK\in L^{1}(\mathbb{R}^{n}),\textit{ with }\; m(x)=\min\left\{\left|x\right|^{p},1\right\};
\end{equation}
\begin{equation}\label{K2}
\textit{there exists}\;\; \theta >0\;\textit{such that }\;K(x)\geq\theta\left|x\right|^{-(n+ps)}\;\textit{for any } x\in\mathbb{R}^{n}\setminus\{0\},
\end{equation}
hold true. A model for $\mathcal L^p_K$ is given by the fractional $p$-Laplacian $(-\Delta)^s_p$ which (up to normalization factors) may be defined for any $x\in\mathbb{R}^{n}$ as
$$(-\Delta)^s_p u(x)=2\lim_{\varepsilon\searrow0}\int_{\RR^n\setminus B_{\varepsilon}(x)}\frac{\left|u(x)-u(y)\right|^{p-2}(u(x)-u(y))}{|x-y|^{n+sp}}\,dxdy\,.$$

For problem~\eqref{Pgen} the appropriate functional space where finding solution is $X^{s,\,p}_0(\Omega)$, defined as
$$X^{s,\,p}_0(\Omega)=\{g\in X^{s,\,p}(\Omega) : g=0 \mbox{ a.e. in }
\mathbb R^n\setminus\Omega\}.$$
Here $X^{s,\,p}(\Omega)$ denotes the linear space of Lebesgue
measurable functions $u:\RR^n\rightarrow\RR$ whose restrictions to $\Omega$ belong to $L^p(\Omega)$ and such that
	\[\mbox{the map }
(x,y)\mapsto (u(x)-u(y))^p K(x-y) \mbox{ is in } L^1\big(Q,dxdy\big),
\]
where $Q=\mathbb R^n \times \mathbb R^n\setminus\left((\mathbb R^n\setminus\Omega)\times(\mathbb R^n\setminus\Omega)\right)$. It is immediate to see that $X^{s,\,p}_0(\Omega)$ is a Banach space endowed with the following norm
\begin{equation}\label{norma2}
\|u\|_{s,p}=
\Big(\iint_{\mathbb R^n \times \mathbb R^n}
|u(x)-u(y)|^pK(x-y)\,dx\,dy\Big)^{1/p}\,.
\end{equation}
When $p=2$ and $K(x)=\left|x\right|^{-(n+2s)}$ the space $X^{s,\,p}_0(\Omega)$ coincides with $X_0^s(\Omega)$ defined in \eqref{spazio} (see \cite[Lemma~5]{svmountain}). In such a case the statements of Theorem~\ref{a}--Theorem~\ref{c} are still valid and their proofs can be performed exactly with the same arguments considered in the model case of the fractional Laplace operator $(-\Delta)^s$.

In order to treat problem~\eqref{Pgen} when $p\not =2$ we have to adapt in a suitable way the arguments used for studying \eqref{P}. Indeed, in this case the main difficulty is related to the fact that we have to understand how to decompose the space $X^{s,\,p}_0(\Omega)$. Indeed, when $p\not=2$ the full spectrum of $(-\Delta)^s_p$ and of $\mathcal L^p_K$ is still almost unknown, even if some important properties of the first eigenvalue and of the higher order (variational) eigenvalues have been established in \cite{fp, ll}.
We would recall that in \cite{bartolomolica} the authors proposed a definition of quasi--eigenvalues for $(-\Delta)^s_p$ and using them considered a suitable decomposition of $X^{s,\,p}_0(\Omega)$ which turns out to be the known one for $p=2$.

The paper is organized as follows.
In Section~\ref{sec variational} we introduce the variational
formulation of the problem under consideration. Section~\ref{sec palais} is devoted to the proof of the compactness property for problem~\eqref{P}.
In Section~\ref{sec main} we conclude the proofs of Theorem~\ref{a}--Theorem~\ref{c}.

\section{Variational setting}\label{sec variational}
Problem~\eqref{P} has a variational structure and the natural space where finding solutions is the homogeneous fractional Sobolev space $H^s_0(\Omega)$. In order to study \eqref{P} it is important to encode the ``boundary condition'' $u=0$ in
$\mathbb{R}^n\setminus\Omega$ (which is different from the classical case of the Laplacian, where it is required $u=0$ on $\partial \Omega$) in the weak formulation,
by considering also that the interaction between $\Omega$ and its complementary in $\mathbb{R}^n$ gives a positive contribution
in the so-called {\em Gagliardo norm} given as
\begin{equation}\label{norma}
\left\|u\right\|_{H^s(\mathbb R^n)}=\left\|u\right\|_{L^2(\mathbb R^n)}+\Big(\iint_{\mathbb R^n \times \mathbb R^n} \frac{|u(x)-u(y)|^2}{\left|x-y\right|^{n+2s}}\,dx\,dy\Big)^{1/2}.
\end{equation}

The functional space that takes into account this boundary condition will be denoted by $X_0^s(\Omega)$ and it is defined as
\begin{equation}\label{spazio}
X_0^s(\Omega)=\big\{u\in H^s(\mathbb R^n):\,u=0\mbox{ a.e. in } \mathbb R^n\setminus \Omega\big\}.
\end{equation}
We refer to \cite{svmountain, servadeivaldinociBN} for a general definition of $X_0^s(\Omega)$ and its properties.
We also would like to point out that, when $\partial\Omega$ is continuous, by \cite[Theorem~6]{fsv} the space $X_0^s(\Omega)$ can be seen as the closure of $C^\infty_0(\Omega)$ with respect to the norm \eqref{norma}. This last point will play a crucial role in the proof of the compactness condition for the energy functional related to \eqref{P}.

In $X_0^s(\Omega)$ we can consider the following norm
\begin{equation}\label{normax0}
\left\|u\right\|_{X_0^s(\Omega)}=\Big(\iint_{\mathbb R^n \times \mathbb R^n} \frac{|u(x)-u(y)|^2}{\left|x-y\right|^{n+2s}}\,dx\,dy\Big)^{1/2},
\end{equation}
which is equivalent to the usual one defined in \eqref{norma} (see \cite[Lemma~6]{svmountain}).
We also recall that $(X_0^s(\Omega),\left\|\cdot\right\|_{X_0^s(\Omega)})$ is a Hilbert space, with the scalar product defined as
\begin{equation}\label{prodottoscalare}
\left\langle u,v\right\rangle_{X_0^s(\Omega)}=\iint_{\mathbb R^n \times \mathbb R^n} \frac{(u(x)-u(y))(v(x)-v(y))}{\left|x-y\right|^{n+2s}}\,dx\,dy.
\end{equation}

{\it From now on, in order to simplify the notation, we will denote $\|\cdot\|_{X_0^s(\Omega)}$ and $\left\langle \cdot,\cdot\right\rangle_{X_0^s(\Omega)}$ by $\|\cdot\|$ and $\left\langle \cdot,\cdot\right\rangle$ respectively, and $\|\cdot\|_{L^q(\Omega)}$ by $\|\cdot\|_q$ for any $q\in[1,+\infty]$.}

A function $u\in X_0^s(\Omega)$ is said to be a ({\em weak}) {\em solution of problem}~\eqref{P} if $u$ satisfies the following weak formulation
\begin{equation}\label{wf}
\left\langle u,\varphi\right\rangle
=\displaystyle\gamma\int_{\Omega} \left|u(x)\right|^{2^*-2}u(x)\varphi(x)dx+\int_{\Omega} f(x,u(x))\varphi(x) dx,
\end{equation}
for any $\varphi \in X_0^s(\Omega)$.
We observe that \eqref{wf} represents the Euler--Lagrange equation of the functional~$\mathcal J_\gamma:X_0^s(\Omega)\to \RR$ defined as
\begin{equation}\label{Jgamma}
\mathcal J_\gamma(u)=\frac 1 2 \left\|u\right\|^2-\frac{\gamma}{2^*}\left\|u\right\|^{2^*}_{2^*}
-\int_\Omega F(x,u(x))\,dx\,,
\end{equation}
where $F$ is as in \eqref{F}. It is easily seen that $\mathcal J_\gamma$ is well defined thanks to \eqref{f1}--\eqref{f2} and \cite[Lemma~6]{svmountain}.
Moreover, $\mathcal J_\gamma\in C^1(X_0^s(\Omega))$, thus critical points of $\mathcal J_\gamma$ are solutions to
problem~\eqref{wf}, that is weak solutions for \eqref{P}.

The proofs of Theorem~\ref{a} and Theorem~\ref{b} are mainly based on variational and topological methods.
Precisely, here we will perform the following version of the symmetric Mountain Pass Theorem (see \cite{ar, bartolo, silva}).

\begin{theorem}[Abstract critical point theorem]\label{abs}
Let $E=V\oplus X$, where $E$ is a real Banach space and $V$ is finite dimensional. Suppose that $\mathcal I\in C^1(E,\mathbb{R})$ is a functional satisfying the following conditions:
\begin{itemize}
\item [$(I_1)$] $\mathcal I(u)=\mathcal I(-u)$ and $\mathcal I(0)=0$;
\item [$(I_2)$] there exists a constant $\rho>0$ such that $\mathcal I|_{\partial B_\rho\cap X}\geq 0$;
\item [$(I_3)$] there exists a subspace $W\subset E$ with $\mbox{dim } V<\mbox{dim } W<+\infty$ and there is $M>0$ such that ${\displaystyle \max_{u\in W}\mathcal I(u)<M}$;
\item [$(I_4)$] considering $M>0$ from $(I_3)$, $\mathcal I(u)$ satisfies $(PS)_c$ condition for $0\leq c\leq M$.
\end{itemize}

Then, there exist at least $\mbox{dim }W-\mbox{codim }V$ pairs of non--trivial critical points of $\mathcal I$.
\end{theorem}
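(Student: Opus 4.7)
My plan is to prove this classical symmetric critical point theorem by a minimax scheme based on the Krasnoselskii genus, in the spirit of Ambrosetti--Rabinowitz, Bartolo--Benci--Fortunato and Benci's pseudo-index theory. Since $\mathcal I$ is even by $(I_1)$, the $\mathbb Z_2$-action $u\mapsto -u$ on $E$ lets us extract symmetric critical sets of controlled genus, and the conclusion will follow by producing at least $\dim W-\operatorname{codim}V$ distinct critical values, each giving a pair of non-trivial critical points.

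First, I would fix the notation and set up the minimax values. Denote by $\Sigma$ the family of closed symmetric subsets of $E\setminus\{0\}$, and by $\gamma(A)$ the Krasnoselskii genus of $A\in\Sigma$. Let $\mathcal H$ be the set of odd continuous maps $h\colon E\to E$ that are homotopic to the identity through equivariant homotopies preserving $\{\mathcal I\le 0\}$. For each integer $k$ with $\operatorname{codim} V<k\le\dim W$, define the pseudo-index
\[
i^{*}(A)=\inf_{h\in\mathcal H}\gamma\bigl(h(A)\cap\partial B_\rho\cap X\bigr),\qquad A\in\Sigma,
\]
and set
\[
c_k=\inf\bigl\{\sup_{u\in A}\mathcal I(u):A\in\Sigma,\;i^{*}(A)\geq k\bigr\}.
\]

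Second, I would show $0\le c_k\le M$. The bound $c_k\ge 0$ uses $(I_2)$ on a point of $A$ lying in $\partial B_\rho\cap X$, which exists because $i^{*}(A)\ge 1$. For the upper bound, I would check that a sufficiently large symmetric compact disk $W^{\sharp}\subset W$ is an admissible competitor with $i^{*}(W^{\sharp})\ge\dim W-\operatorname{codim} V$; then $(I_3)$ gives $\sup_{W^{\sharp}}\mathcal I<M$, hence $c_k\le M$. Third, using the $(PS)_c$ condition of $(I_4)$ together with the standard equivariant deformation lemma, each $c_k$ must be a critical value of $\mathcal I$ (otherwise one constructs an odd homotopy $\eta\in\mathcal H$ pushing the minimizing family below $c_k$, contradicting the definition). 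When several consecutive values coincide, $c_k=\cdots=c_{k+j}=:c$, the standard subadditivity and monotonicity properties of $\gamma$, combined again with the deformation lemma, yield $\gamma(K_c)\ge j+1$ for the critical set $K_c$, and in particular $K_c$ contains at least $j+1$ pairs of non-trivial critical points. Summing over the distinct levels $c_k\in[0,M]$ for $k=\operatorname{codim} V+1,\dots,\dim W$ produces the required $\dim W-\operatorname{codim} V$ pairs.

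The main obstacle is establishing the quantitative linking inequality $i^{*}(W^{\sharp})\ge \dim W-\operatorname{codim} V$: one must show that for every admissible odd map $h\in\mathcal H$ the intersection $h(W^{\sharp})\cap\partial B_\rho\cap X$ has genus at least $\dim W-\operatorname{codim}V$. This is the genuine linking between the finite-dimensional set $W$ and the sphere $\partial B_\rho\cap X$ in the presence of the $\mathbb Z_2$-symmetry, and it rests on a Borsuk--Ulam type argument inside the pseudo-index framework, together with a continuation/extension argument exploiting that $V$ is finite-dimensional and that $h$ is close to the identity on large balls. Once this geometric ingredient is secured, the remaining deformation and intersection arguments are routine.
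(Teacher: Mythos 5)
The paper never proves Theorem~\ref{abs}: it is imported as a known abstract result with a pointer to \cite{ar, bartolo, silva} (the version used here is essentially Silva--Xavier's), so there is no in-paper argument to compare yours with. Your pseudo-index scheme is indeed the route those references take, but as written it is a proof \emph{plan} rather than a proof: the step you yourself label ``the main obstacle'' --- the intersection estimate $i^{*}(W^{\sharp})\ge \dim W-\operatorname{codim}V$ --- is the entire mathematical content of the theorem. The deformation lemma, the monotonicity and subadditivity of the genus, and the multiplicity count when consecutive levels coincide are all routine once that inequality is in hand; you give no argument for it beyond invoking ``a Borsuk--Ulam type argument.'' To close the gap you must actually prove that for every admissible odd $h$ one has $\gamma\bigl(h(\partial Q)\cap\partial B_\rho\cap X\bigr)\ge \dim W-\dim V$, where $Q$ is a bounded symmetric neighborhood of $0$ in $W$; the standard proof composes $h$ with the odd map $u\mapsto\bigl(P_V u,\|P_X u\|\bigr)$ into a $(\dim V+1)$-dimensional space and applies the dimension property of the genus on the set where $h$ hits $\partial B_\rho\cap X$. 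Note also that the count must be $\dim W-\dim V=\dim W-\operatorname{codim}X$: as literally written, $\operatorname{codim}V=\dim X$ is infinite (the paper's statement carries this slip --- its own application computes $\operatorname{codim}\mathbb P_j$, i.e.\ the codimension of $X$, not of $V$), and your claimed lower bound $i^{*}(W^{\sharp})\ge \dim W-\operatorname{codim}V$ would be vacuous or false in the infinite-dimensional setting where the theorem is applied.

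A second genuine gap is the zero level. Hypothesis $(I_2)$ only gives $\mathcal I\ge 0$ on $\partial B_\rho\cap X$, so your minimax values satisfy $c_k\ge 0$ but may equal $0$; in that case the deformation argument produces a critical point at level $0$, which a priori could be the origin, and the conclusion ``pairs of \emph{non-trivial} critical points'' does not follow. This is exactly the delicate point that Silva--Xavier address with a separate argument for critical levels equal to zero (roughly: if $c_k=0$ for some admissible $k$, one shows the critical set at level $0$ intersected with $\partial B_\rho\cap X$ has positive genus, hence contains nonzero symmetric pairs). Your proposal should either reproduce that argument or strengthen $(I_2)$ to $\mathcal I|_{\partial B_\rho\cap X}\ge\alpha>0$, which is what the paper's Lemmas~\ref{lem4.2} and~\ref{lem5.1} actually verify in the applications.
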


In order to prove our main results, the idea consists in applying Theorem~\ref{abs} to the functional $\mathcal J_\gamma$. At this purpose
note that when $f$ is odd in $t$, $\mathcal J_\gamma$ is even and also $\mathcal J_\gamma(0)=0$.
{\it Thus, condition~$(I_1)$ of Theorem~\ref{abs} is always verified by $\mathcal J_\gamma$ and we will not recall it in the sequel.}

For the proof of Theorem~\ref{c} we will use the following version of the Mountain Pass Theorem (see \cite{silva}):
\begin{theorem}\label{abs2}
Let $E$ be a real Banach space. Suppose that $\mathcal I\in C^1(E,\mathbb{R})$ is a functional satisfying the following conditions:
\begin{itemize}
\item [$(I_1)$] $\mathcal I(0)=0$;
\item [$(I_2)$] there exists a constant $\rho>0$ such that $\mathcal I|_{\partial B_\rho}\geq 0$;
\item [$\widehat{(I_3)}$] there exist $v_1\in\partial B_1$ and $M>0$ such that ${\displaystyle \sup_{t\geq0}\mathcal I(tv_1)\leq M}$;
\item [$(I_4)$] considering $M>0$ from $(I_3)$, $\mathcal I(u)$ satisfies $(PS)_c$ condition for $0\leq c\leq M$.
\end{itemize}

Then, $\mathcal I$ possesses a non--trivial critical point.
\end{theorem}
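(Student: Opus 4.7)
The plan is to set up a standard minimax scheme followed by a deformation/Ekeland step, adapted to cope with the weak lower bound $\mathcal{I}|_{\partial B_\rho}\ge 0$ (which, unlike the classical Mountain Pass, does not guarantee strict positivity). Fix $T>\rho$ large enough and introduce the path space
\[\Gamma=\{\gamma\in C([0,1],E)\,:\,\gamma(0)=0,\;\gamma(1)=Tv_1\},\]
together with the minimax level
\[c=\inf_{\gamma\in\Gamma}\max_{t\in[0,1]}\mathcal{I}(\gamma(t)).\]

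The first step is the two-sided bound $0\le c\le M$. The upper bound follows from testing with the linear segment $\gamma_0(t)=tTv_1\in\Gamma$: by $\widehat{(I_3)}$, $\max_{t\in[0,1]}\mathcal{I}(\gamma_0(t))\le\sup_{\tau\ge 0}\mathcal{I}(\tau v_1)\le M$. The lower bound uses that every $\gamma\in\Gamma$ has $\|\gamma(0)\|=0<\rho<T=\|\gamma(1)\|$, so by continuity it must cross $\partial B_\rho$, and at the crossing point $\mathcal{I}\ge 0$ by $(I_2)$; hence $c\ge 0$.

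The second step is to produce a critical point at level $c$. Since $c\in[0,M]$, condition $(I_4)$ provides the Palais--Smale condition at this level. A classical minimax/deformation argument (equivalently, Ekeland's variational principle applied to the complete metric space $(\Gamma,d_\infty)$, where $d_\infty$ is the uniform distance) produces a sequence $\{u_n\}\subset E$ with $\mathcal{I}(u_n)\to c$ and $\mathcal{I}'(u_n)\to 0$, and by $(I_4)$ a subsequence converges to some $u^*\in E$ with $\mathcal{I}(u^*)=c$ and $\mathcal{I}'(u^*)=0$.

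The main obstacle is the non-triviality of $u^*$ in the borderline case $c=0$, where a priori the limit could collapse to the origin since $\mathcal{I}(0)=0$ is itself a critical value. The key device is to localize the construction of the Palais--Smale sequence away from $0$. Concretely, one deforms only the portion of each path lying outside $B_{\rho/2}$, exploiting once more that every $\gamma\in\Gamma$ meets $\partial B_\rho$ where $\mathcal{I}\ge 0$; this forces the Palais--Smale sequence extracted above to satisfy $\|u_n\|\ge\rho$, whence $\|u^*\|\ge\rho>0$. When instead $c>0$, non-triviality is automatic since $\mathcal{I}(u^*)=c>0=\mathcal{I}(0)$.
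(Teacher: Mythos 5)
First, a point of reference: the paper does not prove this statement at all --- it is quoted from Silva and Xavier \cite{silva} --- so there is no internal proof to compare with, and your attempt has to be judged on its own. Your minimax setup is the natural one, the two-sided bound $0\le c\le M$ is proved correctly (segment for the upper bound, crossing of $\partial B_\rho$ for the lower bound), and your identification of $c=0$ as the delicate case, to be handled by localizing the Palais--Smale sequence on the separating sphere $\partial B_\rho$, is the right instinct: this is exactly the Ghoussoub--Preiss/Pucci--Serrin refinement, which does apply there because $\partial B_\rho$ separates $0$ from $Tv_1$ and $\mathcal I\ge 0=c$ on it.

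The genuine gap is in the case $c>0$, which you dismiss as ``classical''. Every classical device (deformation of paths, or Ekeland on $(\Gamma,d_\infty)$ followed by a perturbation $\gamma+h\phi$ with $\phi(0)=\phi(1)=0$) must leave the endpoints of admissible paths fixed, and therefore requires $\mathcal I(\gamma(0)),\mathcal I(\gamma(1))<c$. Hypothesis $\widehat{(I_3)}$ only bounds $\mathcal I$ from \emph{above} on the ray; it never supplies a point $e$ with $\|e\|>\rho$ and $\mathcal I(e)\le 0$ (or even $<c$), which is precisely what distinguishes this theorem from the ordinary Mountain Pass Theorem. Since trivially $c\ge\mathcal I(Tv_1)$, nothing excludes $\mathcal I(Tv_1)=c>0$; if, say, $t\mapsto\mathcal I(tv_1)$ is nondecreasing, then $c=\mathcal I(Tv_1)$ for \emph{every} $T$, so no choice of ``$T$ large enough'' helps, the near-optimal paths attain their maximum at the frozen endpoint, and Ghoussoub--Preiss is also unavailable because $Tv_1$ then lies inside the would-be separating set $\{\mathcal I\ge c\}$. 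Your argument does not cover this configuration. The standard proof avoids minimax levels altogether: assume for contradiction that $\mathcal I$ has no nontrivial critical point with value in $[0,M]$; then $(I_4)$ yields $\delta>0$ with $\|\mathcal I'(u)\|\ge\delta$ whenever $\mathcal I(u)\in[0,M]$ and $\|u\|\ge\rho/2$; run a truncated, normalized negative pseudo-gradient flow (cut off on $B_{\rho/2}$ and on $\{\mathcal I<0\}$) up to a large finite time $t^*$. The image of the segment $[0,Tv_1]$ with $T>\rho+t^*$ is then a path joining $0$ to a point of norm at least $T-t^*>\rho$, hence it meets $\partial B_\rho$, yet every point of that image lies in $\{\mathcal I<0\}\cup B_{\rho/2}$, contradicting $(I_2)$. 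Either switch to this deformation argument or supply the missing case $\mathcal I(Tv_1)=c>0$ in your scheme.
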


\section{The Palais--Smale condition}\label{sec palais}
In this section we verify that the functional $\mathcal J_\gamma$ satisfies the $(PS)_c$ condition under a suitable level. For this, we use some preliminary estimates concerning the nonlinearity $f$ and its primitive $F$. By \eqref{f1} and \eqref{f2} for any $\varepsilon>0$ there exists a constant $C_\varepsilon>0$ such that
\begin{equation}\label{sottocritico}
\left|f(x,t)t\right|\leq C_\varepsilon+\varepsilon\left|t\right|^{2^*}\quad\mbox{for any }t\in\mathbb{R}\mbox{ and a.e. }x\in\Omega
\end{equation}
and
\begin{equation}\label{sottocritico2}
\left|F(x,t)\right|\leq C_\varepsilon+\frac{\varepsilon}{2^*}\left|t\right|^{2^*}\quad\mbox{for any }t\in\mathbb{R}\mbox{ and a.e. }x\in\Omega.
\end{equation}

We recall that $\left\{u_{j}\right\}_{j\in\mathbb{N}}\subset X_0^s(\Omega)$ is a {\it Palais--Smale sequence for $\mathcal J_\gamma$ at level $c\in\mathbb{R}$} (in short $(PS)_c$ sequence) if
\begin{equation}\label{ps1}
\mathcal J_\gamma(u_j)\rightarrow c\quad\mbox{and}\quad\mathcal J'_\gamma(u_j)\rightarrow0\quad\mbox{as}\;j\rightarrow +\infty.
\end{equation}
We say that  $\mathcal J_\gamma$ {\it satisfies the Palais--Smale condition at level} $c$ if any Palais--Smale sequence
$\left\{u_{j}\right\}_{j\in\mathbb{N}}$ at level $c$ admits a convergent subsequence in $X_0^s(\Omega)$.

As usual, we first prove the boundedness of the $(PS)_c$ sequence.
\begin{lemma}\label{bound}
Let $f$ satisfy \eqref{f1}, \eqref{f2} and \eqref{f3}. For any $\gamma>0$, let $c>0$ and $\left\{u_{j}\right\}_{j\in\mathbb{N}}$ be a $(PS)_c$ sequence for $\mathcal J_\gamma$.

Then, $\left\{u_{j}\right\}_{j\in\mathbb{N}}$ is bounded in $X_0^s(\Omega)$.
\end{lemma}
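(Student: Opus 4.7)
The plan is to derive the bound by combining the two Palais–Smale inputs in the classical way, tailored to the weak Ambrosetti–Rabinowitz-type assumption \eqref{f3}. Argue by contradiction and suppose, for a moment, that $\|u_j\|\to+\infty$ along a subsequence. From \eqref{ps1} we have the two pieces of information
\[
\mathcal J_\gamma(u_j)=c+o(1),\qquad |\langle \mathcal J'_\gamma(u_j),u_j\rangle|\leq \varepsilon_j\|u_j\|,\quad \varepsilon_j\to 0.
\]

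First I would form the combination $\mathcal J_\gamma(u_j)-\tfrac12\langle \mathcal J'_\gamma(u_j),u_j\rangle$. The two $\|u_j\|^2$-terms cancel, and using $\tfrac12-\tfrac1{2^*}=\tfrac{s}{n}$ the critical term yields the positive contribution $\tfrac{\gamma s}{n}\|u_j\|_{2^*}^{2^*}$, while the remaining nonlinear part becomes $\int_\Omega\bigl(\tfrac12 f(x,u_j)u_j-F(x,u_j)\bigr)dx$, which by \eqref{f3} is bounded below by $-a_1|\Omega|-a_2\|u_j\|_\sigma^\sigma$. Thus
\[
\tfrac{\gamma s}{n}\|u_j\|_{2^*}^{2^*}\leq c+o(1)+\tfrac12\varepsilon_j\|u_j\|+a_1|\Omega|+a_2\|u_j\|_\sigma^\sigma.
\]
Since $\sigma<2<2^*$, Hölder's inequality together with the continuous embedding $X_0^s(\Omega)\hookrightarrow L^2(\Omega)$ gives $\|u_j\|_\sigma^\sigma\leq C\|u_j\|^\sigma$, so that
\[
\|u_j\|_{2^*}^{2^*}\leq C\bigl(1+\|u_j\|+\|u_j\|^\sigma\bigr).
\]

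Next I would feed this back into $\mathcal J_\gamma(u_j)=c+o(1)$. Isolating the quadratic term and using \eqref{sottocritico2} (with, say, $\varepsilon=\gamma$) to dominate $\int_\Omega F(x,u_j)dx$ by $C_\gamma|\Omega|+\tfrac{\gamma}{2^*}\|u_j\|_{2^*}^{2^*}$, one obtains
\[
\tfrac12\|u_j\|^2\leq C'+C''\|u_j\|_{2^*}^{2^*}\leq C'''\bigl(1+\|u_j\|+\|u_j\|^\sigma\bigr).
\]
Dividing by $\|u_j\|^2$ and using $\sigma<2$ together with $\varepsilon_j\to 0$, every term on the right tends to $0$ along the supposed divergent subsequence, producing the contradiction $\tfrac12\leq 0$. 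Hence $\{u_j\}$ is bounded in $X_0^s(\Omega)$.

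The only delicate point is the weakening of the classical Ambrosetti–Rabinowitz condition: \eqref{f3} allows $\tfrac12 f(x,t)t-F(x,t)$ to be negative, with a controlled subquadratic error $|t|^\sigma$. The argument works precisely because $\sigma<2$, which forces $\|u_j\|^\sigma$ to be negligible compared to $\|u_j\|^2$ at infinity. No further growth assumption on $F$ is needed for this boundedness step, since $F$ is controlled above by \eqref{f2}, and a lower bound on $F$ is not required here.
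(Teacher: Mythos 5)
Your proof is correct and follows essentially the same strategy as the paper's: form $\mathcal J_\gamma(u_j)-\tfrac12\mathcal J'_\gamma(u_j)(u_j)$, use \eqref{f3} to bound $\left\|u_j\right\|^{2^*}_{2^*}$ by $C\bigl(1+\left\|u_j\right\|+\left\|u_j\right\|^\sigma\bigr)$, and feed this back into the energy via \eqref{sottocritico2}. The only cosmetic differences are the contradiction framing and that you control $\left\|u_j\right\|_\sigma^\sigma$ through the embedding into $L^2(\Omega)$ rather than through H\"older into $L^{2^*}(\Omega)$ followed by Young's inequality, as the paper does.
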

\begin{proof}
Fix $\gamma>0$. By \eqref{ps1} there exists $C>0$ such that
\begin{equation}\label{3.10}
\left|\mathcal J_\gamma(u_j)\right|\leq C\quad\mbox{and}\quad\left| \mathcal J'_\gamma(u_j)\left(\frac{u_j}{\left\|u_j\right\|}\right)\right|\leq C
\quad\mbox{for any }j\in\mathbb{N}.
\end{equation}
Moreover, by \eqref{f3} and H\"{o}lder inequality we have
\begin{equation}\label{3.11}
\mathcal J_\gamma(u_j)-\frac{1}{2}\mathcal J'_\gamma(u_j)(u_j)\ge\frac{s\gamma}{n}\left\|u_j\right\|^{2^*}_{2^*}-a_1\left|\Omega\right|-a_2\left|\Omega\right|^{\frac{2^*-\sigma}{2^*}}\left\|u_j\right\|^{\sigma}_{2^*}.
\end{equation}

From Young's inequality with exponents $p=2^*/\sigma$ and $q=2^*/(2^*-\sigma)$ we also get
	\[\left\|u_j\right\|^{\sigma}_{2^*}\leq\delta\left\|u_j\right\|^{2^*}_{2^*}+C_\delta,
\]
for suitable $\delta$, $C_\delta>0$. The last inequality combined with \eqref{3.10} and \eqref{3.11} says that
\begin{equation}\label{3.12}
\left\|u_j\right\|^{2^*}_{2^*}\leq C'\Big(\left\|u_j\right\|+1\Big),
\end{equation}
for another positive constant $C'$.

Now, by \eqref{sottocritico2}, \eqref{3.10} and \eqref{3.12} we obtain
	\[C\geq\mathcal J_\gamma(u_j)\geq\frac{1}{2}\left\|u_j\right\|^2-\left(\frac{C'\gamma}{2^*}-\frac{C'\varepsilon}{2^*}\right)\left(1+\left\|u_j\right\|\right)-C_\varepsilon\left|\Omega\right|,
\]
which gives the boundedness of $\left\{u_{j}\right\}_{j\in\mathbb{N}}$ in $X_0^s(\Omega)$.
\end{proof}

Now, we can prove the relatively compactness of a $(PS)_c$ sequence under a suitable level. Here, we must pay attention to the lack of compactness at level $L^{2^*}(\Omega)$.
\begin{lemma}\label{palais}
Let $f$ satisfy \eqref{f1}, \eqref{f2} and \eqref{f3}.

Then, for any $M>0$ there exists $\gamma^*>0$ such that $\mathcal J_\gamma$ satisfies the $(PS)_c$ condition for any $c\leq M$, provided $0<\gamma<\gamma^*$.
\end{lemma}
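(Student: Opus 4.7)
The plan is to adapt the Brezis--Nirenberg concentration--compactness scheme to the fractional setting, paying careful attention to the dependence on $\gamma$. Let $\{u_j\}\subset X_0^s(\Omega)$ be a $(PS)_c$ sequence with $c\leq M$. By Lemma~\ref{bound} it is bounded, so up to a subsequence $u_j\rightharpoonup u$ in $X_0^s(\Omega)$, $u_j\to u$ in $L^p(\Omega)$ for every $p\in[1,2^*)$, and $u_j\to u$ a.e.\ in $\Omega$. Standard weak-convergence arguments, combined with the subcritical bounds in \eqref{f1}--\eqref{f2} and with the fact that $|u_j|^{2^*-2}u_j\rightharpoonup|u|^{2^*-2}u$ in $L^{2^*/(2^*-1)}(\Omega)$ (which follows from the a.e.\ convergence and boundedness in $L^{2^*}(\Omega)$), show that $u$ is a critical point of $\mathcal J_\gamma$, and in particular $\mathcal J'_\gamma(u)(u)=0$. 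The goal is then to rule out loss of compactness in $L^{2^*}$ whenever $\gamma$ is small.

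The Brezis--Lieb lemma yields $\|u_j\|_{2^*}^{2^*}=\|u\|_{2^*}^{2^*}+\|u_j-u\|_{2^*}^{2^*}+o(1)$, and weak convergence in the Hilbert space $X_0^s(\Omega)$ gives $\|u_j\|^2=\|u\|^2+\|u_j-u\|^2+o(1)$. Combining $\mathcal J'_\gamma(u_j)(u_j)\to 0$ and $\mathcal J'_\gamma(u)(u)=0$, and using strong $L^p$-convergence to discard the $f$-terms, one deduces $\|u_j-u\|^2-\gamma\|u_j-u\|_{2^*}^{2^*}\to 0$. Passing to a further subsequence, set $\ell=\lim_j\|u_j-u\|_{2^*}^{2^*}$; then $\lim_j\|u_j-u\|^2=\gamma\ell$, and the fractional Sobolev inequality $\|v\|^2\geq S\|v\|_{2^*}^2$ forces either $\ell=0$ (which is exactly strong convergence and finishes the proof) or the quantized lower bound $\ell\geq (S/\gamma)^{n/(2s)}$. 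Passing to the limit in the energy produces $c=\mathcal J_\gamma(u)+\frac{s\gamma\ell}{n}$.

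The final ingredient is a lower bound on $\mathcal J_\gamma(u)$, uniform in the sequence. Using $\mathcal J'_\gamma(u)(u)=0$ to eliminate the $\|u\|^2$ term, one rewrites $\mathcal J_\gamma(u)=\frac{s\gamma}{n}\|u\|_{2^*}^{2^*}+\int_\Omega\bigl(\tfrac12 f(x,u)u-F(x,u)\bigr)\,dx$, and then combines \eqref{f3} with H\"older's inequality ($\|u\|_\sigma^\sigma\leq|\Omega|^{1-\sigma/2^*}\|u\|_{2^*}^\sigma$) and Young's inequality (applied to absorb a small multiple of $\|u\|_{2^*}^{2^*}$ coming from $\|u\|_{2^*}^\sigma$) to arrive at $\mathcal J_\gamma(u)\geq -a_1|\Omega|-C\gamma^{-\sigma/(2^*-\sigma)}$, with $C$ depending only on $a_2,\sigma,|\Omega|,s,n$. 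In the concentration case $\ell>0$ this then yields $M\geq c\geq -a_1|\Omega|-C\gamma^{-\sigma/(2^*-\sigma)}+\frac{s}{n}S^{n/(2s)}\gamma^{-(n-2s)/(2s)}$.

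The main obstacle---and the reason $\gamma$ must be small---is reconciling the two negative powers of $\gamma$ in this last inequality. Since $\sigma<2$ in \eqref{f3} (this weakening of the Ambrosetti--Rabinowitz condition is precisely what allows the critical growth to be handled), one has $\sigma/(2^*-\sigma)<2/(2^*-2)=(n-2s)/(2s)$, so the Sobolev threshold $\frac{s}{n}S^{n/(2s)}\gamma^{-(n-2s)/(2s)}$ dominates the correction $C\gamma^{-\sigma/(2^*-\sigma)}$ as $\gamma\to 0^+$. Choosing $\gamma^{*}=\gamma^{*}(M)>0$ so that the right-hand side above strictly exceeds $M$ for every $\gamma\in(0,\gamma^{*})$ contradicts $c\leq M$, thereby ruling out the case $\ell>0$ and forcing $u_j\to u$ in $X_0^s(\Omega)$, which establishes the $(PS)_c$ property.
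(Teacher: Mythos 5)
Your proof is correct, but it follows a genuinely different route from the paper's. The paper runs a ``local'' concentration--compactness argument: it invokes the Palatucci--Pisante version of Lions' lemma (which requires identifying $X_0^s(\Omega)$ with the closure of $C^\infty_0(\Omega)$, hence the continuity of $\partial\Omega$), tests $\mathcal J'_\gamma(u_j)$ against cutoffs $\psi_{\delta,i_0}u_j$ with the commutator estimates of Barrios--Colorado--Servadei--Soria to show that each atom satisfies $\nu_{i_0}=0$ or $\nu_{i_0}\geq[S(n,s)/\gamma]^{n/2s}$, and then excludes the second alternative by bounding the \emph{total mass} $\int_\Omega d\nu\leq[n(M+A)/(s\gamma)]^{2^*/(2^*-\sigma)}$ via \eqref{f3}, comparing the two powers of $\gamma$ exactly as you do. You instead work ``globally'' with the Brezis--Lieb lemma and the Hilbert-space splitting of the norm, obtain the same dichotomy for $\ell=\lim_j\|u_j-u\|_{2^*}^{2^*}$, and exclude $\ell\geq(S/\gamma)^{n/(2s)}$ by the energy identity $c=\mathcal J_\gamma(u)+\tfrac{s\gamma\ell}{n}$ together with the lower bound on $\mathcal J_\gamma(u)$ coming from \eqref{f3}; the comparison of exponents $\sigma/(2^*-\sigma)<(n-2s)/(2s)$ is the same crucial point in both proofs. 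Your approach buys simplicity: it avoids the measure-theoretic machinery, the cutoff/commutator estimates, and the density hypothesis, at the (harmless here) cost of losing the pointwise localization of where concentration would occur. Two small steps deserve a word of justification in a written version: the passage $\int_\Omega f(x,u_j)u_j\,dx\to\int_\Omega f(x,u)u\,dx$ (and likewise for $F$) before strong $L^{2^*}$ convergence is known should be argued via Vitali's theorem, using \eqref{sottocritico} with arbitrary $\varepsilon$ and the $L^{2^*}$-boundedness to get uniform integrability; and the weak convergence $|u_j|^{2^*-2}u_j\rightharpoonup|u|^{2^*-2}u$ in $L^{2^*/(2^*-1)}(\Omega)$ should be credited to the standard a.e.-convergence-plus-boundedness lemma. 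With these remarks your argument is complete.
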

\begin{proof}
Fix $M>0$. We set
\begin{equation}\label{gamma}
\gamma^*=\min\left\{S(n,s), \left[\left(S(n,s)\right)^{\frac{n}{2s}}\left(\frac{s}{n(M+A)}\right)^{\frac{2^*}{2^*-\sigma}}\right]^{\frac{1}{n/2s-2^*/(2^*-\sigma)}}\right\}
\end{equation}
with
\begin{equation}\label{A}
A=a_1\left|\Omega\right|+a_2\left|\Omega\right|^{\frac{2^*-\sigma}{2^*}}\,,
\end{equation}
where $a_1$, $a_2$, $\sigma$ are the constants given in \eqref{f3}, while $S(n,s)$ is the best constant of the fractional Sobolev embedding (see \cite[Lemma~6]{svmountain}) defined as
\begin{equation}\label{Sns}
S(n,s)=\inf_{v\in H^s(\mathbb R^n)\setminus\left\{0\right\}}\frac{\displaystyle{\iint_{\mathbb R^n \times \mathbb R^n}\frac{\left|v(x)-v(y)\right|^2}{\left|x-y\right|^{n+2s}}dx dy}}{\displaystyle{\left(\int_{\mathbb R^n}\left|v(x)\right|^{2^*}\right)^{2/2^*}}}>0.
\end{equation}

Given $\gamma<\gamma^*$ and $c<M$, let us consider a $(PS)_c$ sequence $\left\{u_{j}\right\}_{j\in\mathbb{N}}$ for $\mathcal J_\gamma$.
Since by Lemma~\ref{bound} we have that $\left\{u_{j}\right\}_{j\in\mathbb{N}}$ is bounded in $X_0^s(\Omega)$, by applying also \cite[Lemma~8]{sv} and \cite[Theorem~IV.9]{brezis}, there exists $u\in X_0^s(\Omega)$ such that, up to a subsequence,
\begin{equation}\label{ujconvX0}
u_j \rightharpoonup u \quad \mbox{weakly in}\,\, X_0^s(\Omega),
\end{equation}
\begin{equation}\label{ujconvLq}
u_j \to u \quad \mbox{in}\,\, L^q(\Omega),
\end{equation}
with $q\in[1,2^*)$ and
\begin{equation}\label{ujconvae}
u_j \to u \quad \mbox{a.e in}\,\, \Omega,
\end{equation}
as $j\to +\infty$.

Now, we claim that
\begin{equation}\label{claim}
\left\|u_j\right\|^2\to\left\|u\right\|^2\quad\mbox{as }j\to +\infty,
\end{equation}
which easily implies that $u_j\to u$ in $X_0^s(\Omega)$ as $j\to +\infty$, thanks to \eqref{ujconvX0}.

First of all, from Phrokorov's Theorem we deduce the existence of two positive measures $\mu$ and $\nu$ on $\mathbb{R}^n$ such that
\begin{equation}\label{convergenza misure}
\left|(-\Delta)^{s/2} u_j(x)\right|^2 dx\stackrel{*}{\rightharpoonup}\mu\quad\mbox{and}\quad\left|u_j(x)\right|^{2^*}dx\rightharpoonup\nu\quad\mbox{in }\mathcal M(\mathbb R^n)
\end{equation}
as $j\to +\infty$.
By \cite[Theorem~6]{fsv}, thanks to our assumptions on $\partial\Omega$, it is easy to see that $X_0^s(\Omega)$ can also be defined as the closure of $C^\infty_0(\Omega)$ with respect to the norm \eqref{norma}. Hence, $X_0^s(\Omega)$ is consistent with the functional space introduced in \cite{palatuccipisante}.
Thus, by \cite[Theorem~2]{palatuccipisante} we obtain an at most countable set of distinct points $\left\{x_i\right\}_{i\in J}$,
non--negative numbers $\left\{\nu_i\right\}_{i\in J}$, $\left\{\mu_i\right\}_{i\in J}$ and a positive measure $\widetilde{\mu}$,
with $Supp\; \widetilde{\mu}\subset\overline{\Omega}$, such that
\begin{equation}\label{nu}
\nu=\left|u(x)\right|^{2^*}dx+\sum_{i\in J} \nu_i\delta_{x_i}, \quad \mu=\left|(-\Delta)^{s/2} u(x)\right|^2 dx+\widetilde{\mu}+\sum_{i\in J}\mu_i\delta_{x_i},
\end{equation}
and
\begin{equation}\label{mu}
\nu_i\leq S(n,s)^{-\frac{2^*}{2}}\mu^{\frac{2^*}{2}}_{i}
\end{equation}
for any $i\in J$, where $S(n,s)$ is the constant given in \eqref{Sns}.
Now, in order to prove \eqref{claim} we proceed by steps.

\begin{step}\label{step1}
Fix $i_0\in J$. Then, either $\nu_{i_0}=0$ or
\begin{equation}\label{step}
\nu_{i_0}\geq\displaystyle\left[\frac{S(n,s)}{\gamma}\right]^{n/2s}.
\end{equation}
\end{step}
\begin{proof}
Let $\psi\in C^{\infty}_{0}(\mathbb{R}^n,[0,1])$ be such that $\psi\equiv 1$ in $B(0,1)$ and $\psi\equiv0$ in $\mathbb{R}^n\setminus B(0,2)$. For any $\delta\in(0,1)$ we set $$\psi_{\delta,i_0}(x)=\psi\Big((x-x_{i_0})/\delta\Big)\,.$$
Clearly the sequence $\left\{\psi_{\delta,i_0} u_j\right\}_{j\in\mathbb{N}}$ is bounded in $X_0^s(\Omega)$ by Lemma~\ref{bound}, and so by \eqref{ps1} it follows that
$$\mathcal J'_\gamma(u_j)(\psi_{\delta,i_0} u_j)\to 0$$ as $j\to +\infty$. In other words
\begin{equation}\label{that is}
\begin{alignedat}2
o(1)+&\iint_{\mathbb R^n \times \mathbb R^n}\frac{\big(u_j(x)-u_j(y)\big)\big(\psi_{\delta,i_0}(x)u_j(x)-\psi_{\delta,i_0}(y)u_j(y)\big)}
{\left|x-y\right|^{n+2s}}\,dxdy\\
&=\gamma\int_\Omega \left|u_j(x)\right|^{2^*}\psi_{\delta,i_0}(x)dx+\int_\Omega f(x, u_j(x))\psi_{\delta,i_0}(x)u_j(x)dx,
\end{alignedat}
\end{equation}
as $j\to +\infty$.

By \cite[Proposition~3.6]{valpal} and taking into account the definition of $(-\Delta)^s$ given in \eqref{delta}, we know that for any $v\in X_0^s(\Omega)$
$$\iint_{\mathbb R^n \times \mathbb R^n}\frac{|v(x)-v(y)|^2}{\left|x-y\right|^{n+2s}}\, dxdy=\int_{\mathbb{R}^n}\left|(-\Delta)^{s/2}v(x)\right|^2dx\,.$$
By taking derivative of the above equality, for any $v, w\in X_0^s(\Omega)$ we obtain
\begin{equation}\label{deriv}
\iint_{\mathbb R^n \times \mathbb R^n} \frac{(v(x)-v(y))(w(x)-w(y))}{\left|x-y\right|^{n+2s}}dxdy=\int_{\mathbb{R}^n}(-\Delta)^{s/2}v(x)(-\Delta)^{s/2}w(x)dx.
\end{equation}
Furthermore, for any $v, w\in X_0^s(\Omega)$ we have
\begin{equation}\label{prod}
(-\Delta)^{s/2}(v w)=v(-\Delta)^{s/2}w+w(-\Delta)^{s/2}v-2I_{s/2}(v,w),
\end{equation}
where the last term is defined, in the principal value sense, as follows
	\[I_{s/2}(v,w)(x)=P.V.\int_{\mathbb{R}^n}\frac{(v(x)-v(y))(w(x)-w(y))}{\left|x-y\right|^{n+s}}\,dy
\]
for any $x\in\mathbb R^n$.

Thus, by \eqref{deriv} and \eqref{prod} the integral in the left--hand side of \eqref{that is} becomes
\begin{equation}\label{that2}
\begin{aligned}
&\iint_{\mathbb R^n \times \mathbb R^n}\frac{\big(u_j(x)-u_j(y)\big)\big(\psi_{\delta,i_0}(x)u_j(x)-\psi_{\delta,i_0}(y)u_j(y)\big)}{\left|x-y\right|^{n+2s}}\,dxdy\\
& \quad = \int_{\mathbb{R}^n}(-\Delta)^{s/2}u_j(x)(-\Delta)^{s/2}(\psi_{\delta,i_0}u_j)(x)dx\\
& \quad =\int_{\mathbb{R}^n}u_j(x)(-\Delta)^{s/2}u_j(x)(-\Delta)^{s/2}\psi_{\delta,i_0}(x)dx\\
&\quad\quad+\int_{\mathbb{R}^n}\left|(-\Delta)^{s/2}u_j(x)\right|^2\psi_{\delta,i_0}(x)dx\\
&\quad\quad-2\int_{\mathbb{R}^n}(-\Delta)^{s/2}u_j(x)\int_{\mathbb{R}^n}\frac{(u_j(x)-u_j(y))(\psi_{\delta,i_0}(x)-\psi_{\delta,i_0}(y))}{\left|x-y\right|^{n+s}}dxdy.
\end{aligned}
\end{equation}
By \cite[Lemma~2.8 and Lemma~2.9]{colorado2} we have
\begin{equation}\label{2.8}
\lim_{\delta\rightarrow0}\lim_{j\rightarrow +\infty}\left|\int_{\mathbb{R}^n}u_j(x)(-\Delta)^{s/2}u_j(x)(-\Delta)^{s/2}\psi_{\delta,i_0}(x)dx\right|=0
\end{equation}
and
\begin{equation}\label{2.9}
\lim_{\delta\rightarrow0}\lim_{j\rightarrow +\infty}\left|\int_{\mathbb{R}^n}(-\Delta)^{s/2}u_j(x)\int_{\mathbb{R}^n}\frac{(u_j(x)-u_j(y))
(\psi_{\delta,i_0}(x)-\psi_{\delta,i_0}(y))}{\left|x-y\right|^{n+s}}dxdy\right|=0.
\end{equation}
Then, by combining \eqref{that2}--\eqref{2.9} and \eqref{convergenza misure}--\eqref{nu} we get
\begin{equation}\label{that3}
\lim_{\delta\rightarrow0}\lim_{j\rightarrow +\infty}\iint_{\mathbb R^n \times \mathbb R^n}
\frac{\big(u_j(x)-u_j(y)\big)\big(\psi_{\delta,i_0}(x)u_j(x)-\psi_{\delta,i_0}(y)u_j(y)\big)}
{\left|x-y\right|^{n+2s}}dxdy\geq\mu_{i_0}.
\end{equation}

While, by \eqref{sottocritico} and the Dominated Convergence Theorem we get
\begin{equation*}
\int_{B(x_{i_0},2\delta)} f(x, u_j(x))u_j(x)\psi_{\delta,i_0}(x)dx\rightarrow\int_{B(x_{i_0},2\delta)} f(x, u(x))u(x)\psi_{\delta,i_0}(x)dx\quad\mbox{as }j\rightarrow +\infty,
\end{equation*}
and so by sending $\delta\rightarrow0$ we observe that
\begin{equation}\label{term f}
\lim_{\delta\rightarrow0}\lim_{j\rightarrow +\infty}\int_{B(x_{i_0},2\delta)} f(x, u_j(x))u_j(x)\psi_{\delta,i_0}(x)dx=0.
\end{equation}

Furthermore, by \eqref{convergenza misure} it follows that
	\[\int_\Omega \left|u_j(x)\right|^{2^*}\psi_{\delta,i_0}(x)dx\to\int_\Omega \psi_{\delta,i_0}(x)d\nu\quad\mbox{as }j\to +\infty\,.
\]

Finally, by combining this last formula with \eqref{that is}, \eqref{that3} and \eqref{term f} we get
\begin{equation*}
\nu_{i_0}\geq\frac{\mu_{i_0}}{\gamma}.
\end{equation*}
Thus, from this and \eqref{mu} with $i=i_0$ we have that
$$\nu_{i_0}\geq \frac{\nu_{i_0}^{2/2^*}S(n,s)}{\gamma}\,,$$
which yields that
either $\nu_{i_0}=0$ or $\nu_{i_0}$ verifies \eqref{step}. This ends the proof of Step~\ref{step1}.
\end{proof}

\begin{step}\label{s2}
Estimate \eqref{step} can not occur, hence $\nu_{i_0}=0$.
\end{step}
\begin{proof}
For this, it is enough to see that
\begin{equation}\label{claim2}
\int_\Omega d\nu<\left[\frac{S(n,s)}{\gamma}\right]^{\frac{n}{2s}}\,.
\end{equation}
For this, let us consider two cases. First of all, assume that
\begin{equation}\label{v<1}
{\displaystyle \int_\Omega d\nu\leq 1}\,.
\end{equation}
Since $\gamma<\gamma^*$ and by \eqref{gamma} (which implies that $\gamma^*<S(n,s)$) we have
	\[1<\left(\frac{S(n,s)}{\gamma}\right)^{\frac{n}{2s}},
\]
from which immediately follows \eqref{claim2}, thanks to \eqref{v<1}.

Now, assume that ${\displaystyle \int_\Omega d\nu>1}$. Since $\left\{u_{j}\right\}_{j\in\mathbb{N}}$ is a $(PS)_c$ sequence for $\mathcal J_\gamma$, arguing as in Lemma~\ref{bound} (see formula~\eqref{3.11}) we get
\begin{equation}\label{3.11'}
\mathcal J_\gamma(u_j)-\frac{1}{2}\mathcal J'_\gamma(u_j)(u_j)\ge\frac{s\gamma}{n}\left\|u_j\right\|^{2^*}_{2^*}
-a_1\left|\Omega\right|-a_2\left|\Omega\right|^{\frac{2^*-\sigma}{2^*}}\left\|u_j\right\|^{\sigma}_{2^*}\,.
\end{equation}

By sending $j\rightarrow +\infty$ in \eqref{3.11'} and using \eqref{ps1}, \eqref{convergenza misure} we obtain
$$\begin{aligned}
\frac{s\gamma}{n}\int_\Omega d\nu & \leq c+a_1\left|\Omega\right|+a_2\left|\Omega\right|^{\frac{2^*-\sigma}{2^*}}\Big(\int_\Omega d\nu\Big)^{\frac{\sigma}{2^*}}\\
& \leq\left(M+a_1\left|\Omega\right|+a_2\left|\Omega\right|^{\frac{2^*-\sigma}{2^*}}\right)\Big(\int_\Omega d\nu\Big)^{\frac{\sigma}{2^*}}\\
& = (M+A)\Big(\int_\Omega d\nu\Big)^{\frac{\sigma}{2^*}}\,,
\end{aligned}$$
thanks to the choice of $c\leq M$ and the definition of $A$ given in \eqref{A}. Hence we get
\begin{equation}\label{addraffy222}
\int_\Omega d\nu\leq \left[\frac{n(M+A)}{s\gamma}\right]^{\frac{2^*}{2^*-\sigma}}\,.
\end{equation}

By \eqref{gamma} and the fact that $\gamma<\gamma^*$ we know that
$$\gamma < \left[\left(S(n,s)\right)^{\frac{n}{2s}}\left(\frac{s}{n(M+A)}\right)^
{\frac{2^*}{2^*-\sigma}}\right]^{\frac{1}{n/2s-2^*/(2^*-\sigma)}}\,,$$	
that is
$$\gamma^{\frac{n}{2s}-\frac{2^*}{2^*-\sigma}} < \left(S(n,s)\right)^{\frac{n}{2s}}\left(\frac{s}{n(M+A)}\right)^
{\frac{2^*}{2^*-\sigma}}\,,$$	
which yields
\[\left[\frac{n(M+A)}{s\gamma}\right]^{\frac{2^*}{2^*-\sigma}}
<\left(\frac{S(n,s)}{\gamma}\right)^{\frac{n}{2s}}\,.
\]
From this and \eqref{addraffy222}
we get \eqref{claim2}. Thus, the proof of Step~\ref{s2} is complete and $\nu_{i_0}=0$.
\end{proof}

\begin{step}\label{s3}
Claim \eqref{claim} holds true.
\end{step}
\begin{proof}
By considering that $i_0$ was arbitrary in Step~\ref{step1}, we deduce that $\nu_i=0$ for any $i\in J$. As a consequence, also from \eqref{convergenza misure} and \eqref{nu} it follows that
$u_j\to u$ in $L^{2^*}(\Omega)$ as $j\rightarrow +\infty$.
Thus, by \eqref{sottocritico}, the fact that
\begin{equation}\label{ps1ADD}
\mathcal J'_\gamma(u_j)\rightarrow0\quad\mbox{as}\;j\rightarrow +\infty
\end{equation}
(being $\left\{u_{j}\right\}_{j\in\mathbb{N}}$ a $(PS)_c$ sequence for $\mathcal J_\gamma$)
and the Dominated Convergence Theorem, we have
\begin{equation}\label{4.9}
\lim_{j\to +\infty}\left\|u_j\right\|^2=\gamma\int_\Omega\left|u(x)\right|^{2^*}dx+\int_\Omega f(x,u(x))u(x)dx.
\end{equation}

Moreover, by remembering that $u_j\rightharpoonup u$ in $X_0^s(\Omega)$ and by using again \eqref{sottocritico}, \eqref{ps1ADD}
and the Dominated Convergence Theorem, we have
\begin{equation}\label{4.10}
\left\langle u,\varphi\right\rangle=\gamma\int_\Omega \left|u(x)\right|^{2^*-2}u(x)\varphi(x)dx+\int_\Omega f(x, u(x))\varphi(x)dx,
\end{equation}
for any $\varphi\in X_0^s(\Omega)$. Thus, by combining \eqref{4.9} and \eqref{4.10} with $\varphi=u$ we get the claim \eqref{claim}, concluding the proof of Step~\ref{s3}.
\end{proof}
Hence, the proof of Lemma~\ref{palais} is complete.
\end{proof}

\section{Main theorems}\label{sec main}
This section is devoted to the proof of the main results of the paper. In particular here we study the geometry of the functional~$\mathcal J_\gamma$.

At first, we need some notation. In what follows $\big\{ \lambda_j\big\}_{{j\in\NN}}$  denotes the sequence of the eigenvalues of the following problem
\begin{equation}\label{problemaautovalori}
\left\{\begin{array}{ll}
(-\Delta)^s u=\lambda\, u & \mbox{in } \Omega\\
u=0 & \mbox{in } \RR^n\setminus \Omega,
\end{array}\right.
\end{equation}
with
\begin{equation}\label{lambdacrescente}
0<\lambda_1<\lambda_2\le \dots \le \lambda_j\le \lambda_{j+1}\le \dots
\end{equation}
$${\mbox{$\lambda_j\to +\infty$ as $ j\to +\infty,$}}$$
and with $e_j$ as eigenfunction corresponding to $\lambda_j$.
Also, we choose $\big\{e_j\big\}_{j\in\NN}$ normalized in such a way that
this sequence provides an orthonormal basis of $L^2(\Omega)$ and an orthogonal basis of $X_0^s(\Omega)$. For a complete study of the spectrum of the fractional Laplace operator~$(-\Delta)^s$ we refer to \cite[Proposition~2.3]{sY}, \cite[Proposition~9 and Appendix~A]{svlinking} and \cite[Proposition~4]{servadeivaldinociBNLOW}.

Along the paper, for any $j\in \NN $ we also set
	\[\mathbb{P}_{j+1}=\left\{u\in X_0^s(\Omega):\,\,\left\langle u,e_i\right\rangle=0\quad\mbox{for any } i=1,\ldots,j \right\}\quad(\mbox{with }\mathbb{P}_1=X_0^s(\Omega)),
\]
as defined also in \cite[Proposition~9 and Appendix~A]{svlinking}, while $$\mathbb H_j=\mbox{span}\left\{e_1,\ldots,e_j\right\}$$ will denote the linear subspace generated by the first $j$ eigenfunctions of $(-\Delta)^s$.
It is immediate to observe that $\mathbb P_{j+1}=\mathbb H^{\bot}_j$ with respect to the scalar product in $X_0^s(\Omega)$ defined as in formula~\eqref{prodottoscalare}. Thus, since $X_0^s(\Omega)$ is a Hilbert space (see \cite[Lemma~7]{svmountain} and \eqref{prodottoscalare}), we can write it as a direct sum as follows
$$X_0^s(\Omega)=\mathbb H_j\oplus\mathbb P_{j+1}$$
for any $j\in \NN$\,.
Moreover, since $\big\{e_j\big\}_{j\in\NN}$ is an orthogonal basis of $X_0^s(\Omega)$, it is easy to see that for any $j\in \NN$
$$\mathbb P_{j+1}=\overline{\mbox{span}\left\{e_i:\,\,i\geq j+1\right\}}.$$

Now, before studying and proving the geometric features for $\mathcal J_\gamma$ we need a stronger version of the classical Sobolev embedding. Here the constant of the embedding can be chosen and controlled a priori.

\begin{lemma}\label{lem4.1}
Let $r\in[2,2^*)$ and $\delta>0$.

Then, there exists $j\in\mathbb{N}$ such that $\left\|u\right\|^r_r\leq\delta\left\|u\right\|^r$ for any $u\in\mathbb{P}_{j+1}$.
\end{lemma}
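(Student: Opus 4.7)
The statement asserts that on the tail subspace $\mathbb P_{j+1}$ the embedding constant of $X_0^s(\Omega)\hookrightarrow L^r(\Omega)$ can be made arbitrarily small by taking $j$ large. My plan is to prove this by contradiction, using the compactness of the Sobolev embedding for subcritical exponents together with the fact that the eigenfunctions $\{e_j\}_{j\in\NN}$ form a complete orthogonal system in $X_0^s(\Omega)$.

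Suppose the conclusion fails. Then there exists $\delta_0>0$ such that for every $j\in\NN$ one can find $u_j\in\mathbb P_{j+1}\setminus\{0\}$ with $\|u_j\|_r^r>\delta_0\|u_j\|^r$. Since both sides are $r$-homogeneous, after rescaling I may assume $\|u_j\|=1$ for every $j$, so that
\begin{equation*}
\|u_j\|_r^r>\delta_0\qquad\text{for all }j\in\NN.
\end{equation*}
The sequence $\{u_j\}_{j\in\NN}$ is bounded in $X_0^s(\Omega)$, and since $r\in[2,2^*)$ is strictly subcritical, by \cite[Lemma~8]{sv} the embedding $X_0^s(\Omega)\hookrightarrow L^r(\Omega)$ is compact. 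Hence, up to a subsequence, $u_j\rightharpoonup u$ weakly in $X_0^s(\Omega)$ and $u_j\to u$ strongly in $L^r(\Omega)$ for some $u\in X_0^s(\Omega)$.

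The key step is to identify the weak limit. Fix any $i\in\NN$; for every $j\geq i$ one has $u_j\in\mathbb P_{j+1}\subset\mathbb P_{i+1}$, so $\langle u_j,e_i\rangle=0$. Since $e_i\in X_0^s(\Omega)$ and $u_j\rightharpoonup u$ weakly in $X_0^s(\Omega)$, passing to the limit in $j$ gives $\langle u,e_i\rangle=0$. Because this holds for every $i\in\NN$ and $\{e_i\}_{i\in\NN}$ is an orthogonal basis of $X_0^s(\Omega)$, it follows that $u=0$. Consequently $\|u_j\|_r\to 0$, contradicting $\|u_j\|_r^r>\delta_0>0$, and the proof is complete.

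There is no real obstacle here: the only point requiring care is to use the $X_0^s(\Omega)$-scalar product when taking the weak limit of the orthogonality relations (the eigenfunctions are orthogonal in both $L^2(\Omega)$ and $X_0^s(\Omega)$, but weak convergence in $X_0^s(\Omega)$ is what transfers these relations to $u$). Everything else is a standard application of compactness of the subcritical Sobolev embedding.
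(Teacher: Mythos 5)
Your proof is correct and follows essentially the same route as the paper: a contradiction argument combining the compactness of the subcritical embedding with the vanishing of all coefficients $\langle u,e_i\rangle$ of the weak limit. The only (immaterial) difference is that you normalize by $\|u_j\|=1$ while the paper normalizes by $\|u_j\|_r=1$; both lead to the same contradiction.
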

\begin{proof}
By contradiction, we suppose that there exists $\delta>0$ such that for any $j\in\mathbb{N}$ there exists $u_j\in\mathbb{P}_{j+1}$ which verifies $\left\|u_j\right\|^r_r>\delta\left\|u_j\right\|^r$. Considering $v_j=u_j/\left\|u_j\right\|_r$, we have that $v_j\in \mathbb{P}_{j+1}$,
\begin{equation}\label{normavjr}
\left\|v_j\right\|_r=1
\end{equation}
and $\left\|v_j\right\|<1/\delta$ for any $j\in\mathbb{N}$. Thus, the sequence $\left\{v_j\right\}_{j\in\mathbb{N}}$ is bounded in $X_0^s(\Omega)$ and we may suppose that there exists $v\in X_0^s(\Omega)$ such that, up to a subsequence,
$$v_j\rightharpoonup v \quad \mbox{in} \quad X_0^s(\Omega)$$
and
\begin{equation}\label{addconvr}
v_j\to v \quad \mbox{in} \quad L^r(\Omega)
\end{equation}
as $j\to +\infty$. Hence, by \eqref{normavjr} and \eqref{addconvr} we deduce that
\begin{equation}\label{normavr=1}
\left\|v\right\|_r=1\,.
\end{equation}
Moreover, since $\left\{e_j\right\}_{j\in\mathbb{N}}$ is an orthogonal basis of $X_0^s(\Omega)$ by \cite[Proposition~9]{svlinking}, we can write $v$ as follows
$$v=\sum^\infty_{j=1}\left\langle v, e_j\right\rangle e_j\,.$$

Now, given $k\in\mathbb{N}$ we have $\left\langle v_j, e_k\right\rangle=0$ for any $j\geq k$, since $v_j\in\mathbb{P}_{j+1}$. From this we deduce that $\left\langle v, e_k\right\rangle=0$ for any $k\in\mathbb N$, which clearly implies that $v\equiv 0$. On the other hand, this contradicts \eqref{normavr=1}. Hence, Lemma~\ref{lem4.1} holds true.
\end{proof}

\subsection{Geometric setting for Theorem~\ref{a}}\label{sec a}
In order to prove Theorem~\ref{a}, we just have to verify that the energy functional $\mathcal J_\gamma$ satisfies $(I_2)$ and $(I_3)$ of Theorem~\ref{abs}. For this we will consider $V= \mathbb H_j$ and $X=\mathbb{P}_{j+1}$, with $j\in\mathbb{N}$ chosen as in the following result:

\begin{lemma}\label{lem4.2}
Let $f$ satisfy \eqref{f4}.

Then, there exist $\widetilde{\gamma}>0$, $j\in\mathbb{N}$ and $\rho$, $\alpha>0$ such that $\mathcal J_\gamma(u)\geq\alpha$, for any $u\in\mathbb{P}_{j+1}$ with $\left\|u\right\|=\rho$, and $0<\gamma<\widetilde{\gamma}$.
\end{lemma}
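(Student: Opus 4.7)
The plan is to bound $\mathcal{J}_\gamma$ from below on the sphere $\{u\in\mathbb{P}_{j+1}:\|u\|=\rho\}$ by combining three ingredients: the subcritical growth estimate on $F$ supplied by \eqref{f4}, the fractional Sobolev embedding with best constant $S(n,s)$ from \eqref{Sns}, and, crucially, Lemma~\ref{lem4.1} applied with exponent $r=\theta\in[2,2^*)$, which on the high-frequency subspace $\mathbb{P}_{j+1}$ yields an arbitrarily small Poincar\'e-type constant linking the $L^\theta$-norm to $\|\cdot\|$. The symmetric assumption \eqref{odd} does not play any role here; only \eqref{f4} is used.

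Concretely, for $u\in X_0^s(\Omega)$ with $\|u\|=\rho$, by \eqref{f4} one has $\int_\Omega F(x,u)\,dx\le b_1\|u\|_\theta^\theta+b_2|\Omega|$, while the embedding \eqref{Sns} gives $\|u\|_{2^*}^{2^*}\le S(n,s)^{-2^*/2}\rho^{2^*}$. For any $\delta>0$, Lemma~\ref{lem4.1} produces $j=j(\delta)\in\mathbb{N}$ such that $\|u\|_\theta^\theta\le \delta\,\rho^\theta$ whenever $u\in\mathbb{P}_{j+1}$ has $\|u\|=\rho$. Plugging these bounds into the definition \eqref{Jgamma} yields, on this sphere,
\begin{equation*}
\mathcal{J}_\gamma(u)\ge \frac{1}{2}\rho^2-b_2|\Omega|-b_1\delta\,\rho^\theta-\frac{\gamma}{2^*\,S(n,s)^{2^*/2}}\rho^{2^*}.
\end{equation*}
Since $2<\theta<2^*$, the parameters are chosen in the following order. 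First, fix $\rho>0$ so large that $\tfrac{1}{2}\rho^2-b_2|\Omega|\ge \tfrac{1}{4}\rho^2$, for instance $\rho=2\sqrt{b_2|\Omega|+1}$. Next, with this $\rho$ frozen, choose $\delta>0$ so small that $b_1\delta\,\rho^\theta\le \tfrac{1}{16}\rho^2$, and let $j\in\mathbb{N}$ be the integer provided by Lemma~\ref{lem4.1} for this $\delta$ and $r=\theta$. Finally, define $\widetilde{\gamma}>0$ by imposing $\tfrac{\widetilde{\gamma}}{2^*\,S(n,s)^{2^*/2}}\rho^{2^*}\le\tfrac{1}{16}\rho^2$. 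For every $\gamma\in(0,\widetilde{\gamma})$ the four displayed contributions combine to give $\mathcal{J}_\gamma(u)\ge \tfrac{1}{8}\rho^2=:\alpha>0$ on the required sphere.

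The argument is essentially bookkeeping, and the one genuinely delicate point is the presence of the additive constant $b_2|\Omega|$ coming from \eqref{f4}: this is what forces $\rho$ to be chosen \emph{first} and of size comparable to $\sqrt{b_2|\Omega|}$, rather than running the customary "small-ball" mountain pass geometry. Once $\rho$ is fixed, the subcritical exponent $\theta>2$ makes $b_1\delta\rho^\theta$ small by lowering $\delta$ through Lemma~\ref{lem4.1} (whose pointwise-on-$\mathbb{P}_{j+1}$ inequality is what makes the lack of $L^\theta$-compactness at a fixed $j$ harmless), and the super-quadratic exponent $2^*>2$ makes $\gamma\,\rho^{2^*}$ small by lowering $\gamma$. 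The order $\rho\leadsto \delta\leadsto j\leadsto \widetilde{\gamma}$ must be respected, but no further analytic input beyond \eqref{f4}, \eqref{Sns} and Lemma~\ref{lem4.1} is needed.
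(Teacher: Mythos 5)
Your proof is correct and follows essentially the same route as the paper: the lower bound coming from \eqref{f4} together with the Sobolev embedding, Lemma~\ref{lem4.1} applied with $r=\theta$ to make the $L^\theta$ term small on $\mathbb{P}_{j+1}$, and a final smallness condition on $\gamma$. The only (harmless) difference is bookkeeping: the paper couples $\rho$ to $\delta$ via $b_1\delta\rho^{\theta-2}=1/4$ and sends $\delta\to 0$ so that $\rho\to+\infty$ absorbs $b_2\left|\Omega\right|$, whereas you fix $\rho$ of size comparable to $\sqrt{b_2\left|\Omega\right|}$ first and then shrink $\delta$; both orderings are valid.
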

\begin{proof}
Take $\gamma>0$.
By \eqref{f4} and \cite[Lemma~6]{svmountain} we get a suitable constant $c>0$ such that
\begin{equation}\label{add22222}
\mathcal J_\gamma(u)\geq\frac{1}{2}\left\|u\right\|^2-b_1\left\|u\right\|^\theta_\theta-b_2\left|\Omega\right|-\gamma c\left\|u\right\|^{2^*},
\end{equation}
for any $u\in X_0^s(\Omega)$.
Let $\delta>0$: we will fix it in the sequel. By \eqref{add22222} and Lemma~\ref{lem4.1} there exists $j\in\mathbb{N}$ such that
\begin{equation}\label{geo1}
\mathcal J_\gamma(u)\geq\left\|u\right\|^2\left(\frac{1}{2}-b_1\delta\left\|u\right\|^{\theta-2}\right)-b_2\left|\Omega\right|-\gamma c\left\|u\right\|^{2^*},
\end{equation}
for any $u\in\mathbb{P}_{j+1}$.

Now, consider $\left\|u\right\|=\rho=\rho(\delta)$, with $\rho$ such that $b_1\delta\rho^{\theta-2}=1/4$, so that
$$\mathcal J_\gamma(u)\geq \frac 1 4 \rho^2-b_2|\Omega|-\gamma c\rho^{2^*}$$
for any $u\in\mathbb{P}_{j+1}$, thanks to \eqref{geo1}.

Now, observe that $\rho(\delta)\rightarrow +\infty$ as $\delta\rightarrow0$, since $\theta>2$. Hence, we can choose $\delta$ sufficiently small such that $\rho^2 /4-b_2\left|\Omega\right|\geq\rho^2 /8$, which  yields
\[\mathcal J_\gamma(u)\geq\frac{1}{8}\rho^2-\gamma c\rho^{2^*},
\]
for any $u\in\mathbb{P}_{j+1}$ with $\left\|u\right\|=\rho$.

Finally, let $\widetilde{\gamma}>0$ be such that $\frac{1}{8}\rho^2-\widetilde{\gamma} c\rho^{2^*}=\alpha>0$. Then we get
$$\mathcal J_\gamma(u)\geq \mathcal J_{\widetilde{\gamma}}(u)\geq \alpha$$
for any $u\in\mathbb{P}_{j+1}$ with $\|u\|=\rho$ and any $\gamma\in (0,\widetilde{\gamma})$\,,
concluding the proof.
\end{proof}

\begin{lemma}\label{lem4.3}
Let $f$ satisfy \eqref{f5} and let $l\in\mathbb{N}$.

Then, there exist a subspace $W$ of $X_0^s(\Omega)$ and a constant $M_l>0$, independent of $\gamma$, such that $\mbox{dim } W=l$ and ${\displaystyle \max_{u\in W}\mathcal J_0(u)<M_l}$.
\end{lemma}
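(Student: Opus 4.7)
The plan is to build $W$ explicitly by concentrating $l$ disjoint bumps on $\Omega_0$, and to use the superquadratic behaviour of $F$ encoded in \eqref{f5} to force $\mathcal{J}_0|_W$ to tend to $-\infty$ at infinity on $W$. Since the functional $\mathcal{J}_0$ does not involve $\gamma$ at all, the independence of $M_l$ from $\gamma$ will be automatic.

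First, exploiting the assumption $|\Omega_0|>0$, I would pick $l$ distinct Lebesgue density points $x_1,\ldots,x_l$ of $\Omega_0$ lying in $\Omega$, and a radius $r>0$ small enough that the balls $B_{2r}(x_i)$ are pairwise disjoint and compactly contained in $\Omega$, with $\mu_i:=|B_r(x_i)\cap\Omega_0|>0$ for each $i$. Then I would choose bumps $\phi_i\in C^\infty_c(B_{2r}(x_i))$ with $0\le\phi_i\le 1$ and $\phi_i\equiv 1$ on $B_r(x_i)$, and set $W:=\mathrm{span}\{\phi_1,\ldots,\phi_l\}\subset X_0^s(\Omega)$. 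Because the $\phi_i$ have pairwise disjoint supports they are linearly independent, so $\dim W=l$; moreover equivalence of norms on the finite dimensional space $W$ yields $\|u\|^2\le C_W\sum_{i}a_i^2$ for every $u=\sum_i a_i\phi_i\in W$, with $C_W$ depending only on the $\phi_i$.

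Next, for such a $u$ the integral $\int_\Omega F(x,u)\,dx$ splits as a sum over the balls $B_{2r}(x_i)$, since $u\equiv 0$ outside them and $F(x,0)=0$. On the sub-ball $B_r(x_i)\cap\Omega_0$ one has $u=a_i$, and the superquadratic limit in \eqref{f5}, combined with the boundedness statement \eqref{f1} on bounded $t$, gives for any prescribed $M>0$ a uniform lower bound of the form $F(x,a_i)\ge Ma_i^2-C_M$. On the complementary pieces (both $B_r(x_i)\setminus\Omega_0$ and the annulus $B_{2r}(x_i)\setminus B_r(x_i)$) I would invoke the pointwise estimate $F(x,t)\ge -h_1(x)|t|^2-c_1$ from \eqref{f5}, using $|a_i\phi_i|\le|a_i|$. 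Summing over $i$ and collecting terms produces
\[
\int_\Omega F(x,u)\,dx \;\geq\; \bigl(M\mu-\|h_1\|_{L^1(\Omega)}\bigr)\sum_{i}a_i^2\;-\;K_M,
\]
where $\mu:=\min_i\mu_i>0$ and $K_M$ depends only on $M$, $C_M$, $c_1$, $|\Omega|$ and $l$. Choosing $M$ so large that $M\mu>\tfrac12 C_W+\|h_1\|_{L^1(\Omega)}$, the estimate
\[
\mathcal{J}_0(u)=\tfrac12\|u\|^2-\int_\Omega F(x,u)\,dx\;\leq\;K_M
\]
would hold for every $u\in W$, and any $M_l>K_M$ does the job.

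The only slightly delicate step is the first one: producing $l$ distinct balls in $\Omega$ each meeting $\Omega_0$ in positive Lebesgue measure. This is delivered by the Lebesgue density theorem, since almost every point of $\Omega_0$ is a density point, providing infinitely many admissible centres which can then be surrounded by arbitrarily small disjoint balls. After that the argument is a routine quadratic versus superquadratic balance and is robust under mild changes of the bumps; crucially, every constant appearing in the final bound is intrinsic to $\Omega$, $f$ and the chosen bumps, so $M_l$ is independent of $\gamma$ as required.
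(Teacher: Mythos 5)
Your argument is correct and complete. Since the paper's own ``proof'' is only a pointer to Silva--Xavier (with a hint that one could use eigenfunctions of $(-\Delta)^s$), what you have written is essentially the missing content: the quadratic-versus-superquadratic balance on a finite-dimensional subspace, with the lower bound $F(x,t)\ge -h_1(x)|t|^2-c_1$ handling the complement of $\Omega_0$ and the uniform $\liminf$ in \eqref{f5} (upgraded via \eqref{f1} to $F(x,t)\ge M|t|^2-C_M$ a.e.\ on $\Omega_0$) supplying the dominant positive term. Your specific choice of $W$ --- disjoint bumps $\phi_i\in C_0^\infty(B_{2r}(x_i))$ with $\phi_i\equiv 1$ on $B_r(x_i)$ and $x_i$ density points of $\Omega_0$ --- is a genuinely convenient variant: it makes $u\mapsto\|u\|_{L^2(\Omega_0)}$ \emph{manifestly} an equivalent norm on $W$ (each coefficient $a_i$ is read off on the positive-measure set $B_r(x_i)\cap\Omega_0$), whereas a $W$ spanned by eigenfunctions would require either a unique-continuation property of $(-\Delta)^s$ or a separate compactness/normalization argument to exclude elements vanishing a.e.\ on $\Omega_0$. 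All constants in your final bound ($C_W$, $\mu$, $\|h_1\|_{L^1(\Omega)}$, $K_M$) depend only on $f$, $\Omega$ and the chosen bumps, so the independence of $M_l$ from $\gamma$ is indeed automatic, as it must be since $\mathcal J_0$ does not involve $\gamma$. No gaps.
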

\begin{proof}
Here we can argue exactly as in \cite[Lemma~4.3]{silva} where the classical case of the Laplacian was considered. For this, we can use also the properties of eigenfunctions of $(-\Delta)^s$ (see \cite{svlinking}).
\end{proof}

\begin{proof}[\bf Proof of Theorem~\ref{a}]
By Lemma~\ref{lem4.2} we find $j\in\mathbb{N}$ and $\widetilde{\gamma}>0$ such that $\mathcal J_\gamma$ satisfies $(I_2)$ in $X=\mathbb P_{j+1}$, for any $0<\gamma<\widetilde{\gamma}$. While, by Lemma~\ref{lem4.3} for any $k\in \NN$ there is a subspace $W\subset X_0^s(\Omega)$ with $\mbox{dim }W=k+j$ and such that $\mathcal J_\gamma$ satisfies $(I_3)$ with $M=M_{j+k}>0$ for any $\gamma>0$, since $\mathcal J_\gamma<\mathcal J_0$.

Finally, we note that by Lemma~\ref{palais}, considering $\widetilde{\gamma}$ smaller if necessary, we have that $\mathcal J_\gamma$ satisfies $(I_4)$ for any $0<\gamma<\widetilde{\gamma}$. Thus, we may apply Theorem~\ref{abs} to conclude that $\mathcal J_\gamma$ admits $k$ pairs of non--trivial critical points for $\gamma>0$ sufficiently small. Hence, Theorem~\ref{a} is proved.
\end{proof}

\subsection{Geometric setting for Theorem~\ref{b}}\label{sec b}
We apply again Theorem~\ref{abs} to the functional~$\mathcal J_\gamma$. By considering $\lambda_j\leq\lambda_k$ as in \eqref{ft5} and \eqref{ft4}, we have two cases. When $j=1$ we set $V=\left\{0\right\}$, so $X=X_0^s(\Omega)$: note that this is consistent with the situation $\mathbb{P}_1=X_0^s(\Omega)$. While if $j>1$ we consider $X=\mathbb{P}_j$ and $V=\mathbb H_{j-1}$. Moreover, we set $W=\mathbb H_k$ as subspace of $X_0^s(\Omega)$ in $(I_3)$.

Now, in order to verify the geometric assumptions $(I_2)$ and $(I_3)$ in Theorem~\ref{abs} we consider here two different characterizations of the eigenvalues of $(-\Delta)^s$. That is, for any $j\in\mathbb{N}$ by \cite[Proposition~9]{svlinking} we have that
\begin{equation}\label{min}
\lambda_j=\min_{u\in\mathbb{P}_j\setminus\{0\}}\frac{\left\|u\right\|^2}{\left\|u\right\|^2_2},
\end{equation}
while from \cite[Proposition~2.3]{sY} we know that
\begin{equation}\label{max}
\lambda_j=\max_{u\in\mathbb H_j\setminus\{0\}}\frac{\left\|u\right\|^2}{\left\|u\right\|^2_2}.
\end{equation}

Moreover, we need the following technical lemma:

\begin{lemma}\label{lemtec}
Let $a:\Omega\rightarrow\mathbb{R}$ be the measurable function given in \eqref{ft5}. Then, there exists $\beta>0$ such that for any $u\in\mathbb{P}_j$
$$\left\|u\right\|^2-\int_\Omega a(x)\left|u(x)\right|^2dx\geq\beta\left\|u\right\|^2_2\,.$$
\end{lemma}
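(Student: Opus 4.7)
The plan is to argue by contradiction. Suppose the conclusion fails. Then for every $n\in\NN$ there exists $u_n\in\mathbb{P}_j$ with $\|u_n\|^2-\int_\Omega a(x)u_n(x)^2\,dx<\frac{1}{n}\|u_n\|_2^2$. After rescaling so that $\|u_n\|=1$, the bound $a\leq\lambda_j$ a.e.\ in $\Omega$ gives $1<(\lambda_j+1/n)\|u_n\|_2^2$, so $\|u_n\|_2$ is bounded away from $0$ and $\{u_n\}$ is bounded in $X_0^s(\Omega)$. Using the compact embedding $X_0^s(\Omega)\hookrightarrow L^2(\Omega)$, I extract a subsequence converging weakly in $X_0^s(\Omega)$, strongly in $L^2(\Omega)$, and almost everywhere to some $u$; since $\mathbb{P}_j$ is weakly closed, $u\in\mathbb{P}_j$, and $\|u\|_2>0$.

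Next, I pass to the limit in the defining inequality. The right-hand side $\frac{1}{n}\|u_n\|_2^2$ tends to $0$, while $\int_\Omega a(x)u_n^2\,dx\to\int_\Omega a(x)u^2\,dx$ by dominated convergence (using that $a\in L^\infty(\Omega)$, which is implicit in the uniform convergence assumed in \eqref{ft5}). Combining this with weak lower semicontinuity of $\|\cdot\|$, the bound $a\leq\lambda_j$, and \eqref{min} applied to $u\in\mathbb{P}_j$, one obtains the chain
\[
\|u\|^2\leq 1\leq\int_\Omega a(x)u^2\,dx\leq\lambda_j\|u\|_2^2\leq\|u\|^2,
\]
so all four quantities coincide; in particular $\|u\|^2=\lambda_j\|u\|_2^2$ and $\int_\Omega(\lambda_j-a(x))u^2\,dx=0$.

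The first equality, combined with the eigenfunction expansion $u=\sum_{i\geq j}c_i e_i$, forces $c_i=0$ whenever $\lambda_i>\lambda_j$, so $u$ is a non-trivial eigenfunction of $(-\Delta)^s$ associated with the eigenvalue $\lambda_j$. The second equality, together with $\lambda_j-a\geq 0$ a.e.\ and the fact that $\lambda_j-a>0$ on a set of positive measure (by \eqref{ft5}), forces $u$ to vanish on that set. The contradiction then follows from the unique continuation property for Dirichlet eigenfunctions of the fractional Laplacian: no non-trivial such eigenfunction can vanish on a subset of $\Omega$ of positive measure.

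The main obstacle is precisely this last step. Unique continuation for $(-\Delta)^s$ is substantially more delicate than in the classical local setting and must be imported from the literature (for instance, the results of Fall--Felli or R\"uland). Every other ingredient — the rescaling, the weak/strong convergence, and the elementary spectral description of $\mathbb{P}_j$ recalled in Section~\ref{sec main} — is routine.
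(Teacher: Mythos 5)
Your argument is correct and, apart from the choice of normalization ($\|u_n\|=1$ instead of $\|u_n\|_2=1$), it follows the same contradiction-plus-compactness scheme as the paper: both proofs extract a weak limit $u\in\mathbb{P}_j$ with $\|u\|_2>0$, pass to the limit in the defining inequality, and arrive at $\int_\Omega(\lambda_j-a(x))|u(x)|^2\,dx=0$ together with the fact that $u$ realizes equality in \eqref{min}. The two proofs diverge at the last step, and yours is the more careful one. The paper concludes from $\int_\Omega(\lambda_j-a)|v|^2\,dx=0$ that $a=\lambda_j$ a.e.\ in $\Omega$, justifying this by the claim that $\|v\|_2=1$ makes $v$ ``almost everywhere different from zero''; that implication is false in general, since an $L^2$-normalized function may well vanish on a set of positive measure. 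You have correctly located the missing ingredient: one first shows (via the equality $\|u\|^2=\lambda_j\|u\|_2^2$ and the expansion in the basis $\{e_i\}$) that $u$ is a nontrivial $\lambda_j$-eigenfunction, and then one must invoke a unique continuation property to rule out that it vanishes a.e.\ on the positive-measure set $\{a<\lambda_j\}$. This input cannot be avoided: if some $\lambda_j$-eigenfunction vanished a.e.\ on a set $E$ of positive measure, then taking $a=\lambda_j$ off $E$ and $a<\lambda_j$ on $E$ would make the lemma itself false. Unique continuation from sets of positive measure for fractional Dirichlet eigenfunctions is indeed available in the literature (Fall--Felli, R\"uland), so your proof is complete modulo that citation. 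One minor point: dominated convergence for $\int_\Omega a|u_n|^2\,dx$ presupposes $a\in L^\infty(\Omega)$, whereas \eqref{ft5} only bounds $a$ from above; the one-sided inequality you actually use, $\limsup_n\int_\Omega a|u_n|^2\,dx\le\int_\Omega a|u|^2\,dx$, follows instead from Fatou's lemma applied to the nonnegative integrand $(\lambda_j-a)|u_n|^2$, so nothing is lost.
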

\begin{proof}
We argue by contradiction and we suppose that for any $i\in\mathbb{N}$ there exists $u_i\in\mathbb{P}_j$ such that
\begin{equation}\label{AddRaffy}
\left\|u_i\right\|^2-\int_\Omega a(x)\left|u_i(x)\right|^2dx<\frac{1}{i}\left\|u_i\right\|^2_2.
\end{equation}

Let $v_i=u_i/\left\|u_i\right\|_2$. Of course, $v_i\in \mathbb{P}_j$ and
\begin{equation}\label{vi}
\left\|v_i\right\|_2=1
\end{equation}
for any $i\in \NN$. By \eqref{ft5}, \eqref{min}, \eqref{AddRaffy} and \eqref{vi} we get
\begin{equation}\label{4.5}
\begin{aligned}
\lambda_j & \leq\left\|v_i\right\|^2\\
& <\int_\Omega a(x)\left|v_i(x)\right|^2 dx+\frac{1}{i}\\
& \leq\lambda_j\int_\Omega \left|v_i(x)\right|^2 dx+\frac{1}{i}\\
& \leq\lambda_j+\frac{1}{i}
\end{aligned}
\end{equation}
for any $i\in \NN$. From this, we have that $\left\{v_i\right\}_{i\in\mathbb{N}}$ is a bounded sequence in $X_0^s(\Omega)$. Therefore, by applying \cite[Lemma~8]{svmountain} and \cite[Theorem~IV.9]{brezis} there exists $v\in X_0^s(\Omega)$ such that, up to a subsequence, $v_i$ converges to $v$ weakly in $X_0^s(\Omega)$, strongly in $L^2(\Omega)$ and a.e. in $\Omega$ as $j\to +\infty$ and $\left|v_i\right|\leq h\in L^2(\Omega)$ a.e. in $\Omega$. Thus, by \eqref{vi} we know that $\left\|v\right\|_2=1$, so that $v$ is almost everywhere different from zero in $\Omega$, i.e.
\begin{equation}\label{vnot0}
v\not \equiv 0 \quad \mbox{in} \quad \Omega\,.
\end{equation}

By sending $i\rightarrow +\infty$ in \eqref{4.5} and using the Dominated Convergence Theorem and \eqref{AddRaffy}, we get
\begin{equation}\label{4.6}
\int_\Omega \big(\lambda_j-a(x)\big)\left|v(x)\right|^2 dx=0.
\end{equation}
Then, \eqref{ft5}, \eqref{vnot0} and \eqref{4.6} implies that
\[a(x)=\lambda_j\quad\mbox{a.e. in }\Omega, \]
which contradicts the assumption \eqref{ft5}. Hence, Lemma~\ref{lemtec} holds true.
\end{proof}

Now we are ready to prove that $\mathcal J_\gamma$ satisfies $(I_2)$ and $(I_3)$ of Theorem~\ref{abs}.
\begin{lemma}\label{lem5.1}
Let $f$ satisfy \eqref{f1}, \eqref{f2} and \eqref{ft5}.

Then, for any $\gamma>0$ there exist $\rho$, $\alpha>0$ such that $\mathcal J_\gamma(u)\geq\alpha$ for any $u\in\mathbb{P}_j$ with $\left\|u\right\|=\rho$.
\end{lemma}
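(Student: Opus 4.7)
The plan is to derive a pointwise upper bound on $F$ that isolates the quadratic behaviour near zero (controlled via \eqref{ft5}) from the critical tail (controlled via \eqref{f2}), and then exploit Lemma~\ref{lemtec} together with the spectral constraint $a\le\lambda_j$ to upgrade the quadratic part into a genuine coercivity estimate on $\mathbb{P}_j$. Concretely, from \eqref{ft5} one gets, for every $\varepsilon>0$, a $\delta>0$ (uniform in $x$) with $F(x,t)\le\tfrac{1}{2}(a(x)+\varepsilon)|t|^2$ whenever $|t|\le\delta$; combining this with \eqref{sottocritico2} on $\{|t|>\delta\}$ (after absorbing the additive constant $C_\varepsilon$ into $C_\varepsilon\delta^{-2^*}|t|^{2^*}$) produces a constant $\widetilde C_\varepsilon>0$ such that
$$F(x,t)\le\tfrac{1}{2}(a(x)+\varepsilon)|t|^2+\widetilde C_\varepsilon|t|^{2^*}\qquad\text{for every }t\in\RR\text{ and a.e. }x\in\Omega.$$
Substituting this into \eqref{Jgamma} yields
$$\mathcal J_\gamma(u)\ge\tfrac{1}{2}\Bigl(\|u\|^2-\int_\Omega a(x)|u(x)|^2\,dx\Bigr)-\tfrac{\varepsilon}{2}\|u\|_2^2-\Bigl(\widetilde C_\varepsilon+\tfrac{\gamma}{2^*}\Bigr)\|u\|_{2^*}^{2^*}.$$

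Next I would upgrade the quadratic form on $\mathbb{P}_j$ from an $\|\cdot\|_2$-lower bound to an $\|\cdot\|$-lower bound. Lemma~\ref{lemtec} gives $\|u\|^2-\int_\Omega a(x)|u|^2\,dx\ge\beta\|u\|_2^2$, while $a\le\lambda_j$ combined with \eqref{min} forces $\int_\Omega a(x)|u|^2\,dx\le\lambda_j\|u\|_2^2\le\|u\|^2$ and hence $\|u\|^2-\int_\Omega a(x)|u|^2\,dx\ge\|u\|^2-\lambda_j\|u\|_2^2$. Taking the convex combination of these two bounds with weight $\theta=\lambda_j/(\lambda_j+\beta)$ cancels the $\|u\|_2^2$ terms and delivers
$$\|u\|^2-\int_\Omega a(x)|u(x)|^2\,dx\ge\frac{\beta}{\beta+\lambda_j}\|u\|^2=:\widetilde\beta\|u\|^2\qquad\text{for every }u\in\mathbb{P}_j.$$

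Finally I would apply the Poincaré-type bound $\|u\|_2^2\le\lambda_1^{-1}\|u\|^2$ (from \eqref{min} with $j=1$) and the Sobolev embedding $\|u\|_{2^*}^{2^*}\le S(n,s)^{-2^*/2}\|u\|^{2^*}$ (from \eqref{Sns}) to collapse the estimate on $\mathbb{P}_j$ to
$$\mathcal J_\gamma(u)\ge\Bigl(\tfrac{\widetilde\beta}{2}-\tfrac{\varepsilon}{2\lambda_1}\Bigr)\|u\|^2-C(\varepsilon,\gamma)\|u\|^{2^*}.$$
Choosing $\varepsilon<\widetilde\beta\lambda_1$ makes the quadratic coefficient $c_1>0$, and since $2^*>2$ any $\rho>0$ so small that $C(\varepsilon,\gamma)\rho^{2^*-2}\le c_1/2$ then delivers $\mathcal J_\gamma(u)\ge\tfrac{c_1}{2}\rho^2=:\alpha>0$ on $\partial B_\rho\cap\mathbb{P}_j$, as required.

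The main obstacle I anticipate is the passage from the $L^2$-coercivity of Lemma~\ref{lemtec} to a genuine $X_0^s(\Omega)$-coercivity on $\mathbb{P}_j$; this is where the full strength of the pointwise bound $a(x)\le\lambda_j$ (rather than just the strict inequality on a set of positive measure) is used, via the convex-combination trick. Once that step is in place, the rest of the argument is routine: the $\|u\|^{2^*}$ contribution coming from both the critical nonlinearity and the absorption of $\widetilde C_\varepsilon$ is of higher order than the quadratic part, so a sufficiently small sphere lies entirely in the region where $\mathcal J_\gamma$ is uniformly positive.
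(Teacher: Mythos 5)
Your proposal is correct and follows essentially the same route as the paper: the pointwise bound $F(x,t)\le\tfrac{1}{2}(a(x)+\varepsilon)|t|^2+C_\varepsilon|t|^{2^*}$ is the paper's inequality \eqref{4.7}, and your convex-combination step (weight $\lambda_j/(\lambda_j+\beta)$ between Lemma~\ref{lemtec} and the bound $\int_\Omega a|u|^2\,dx\le\lambda_j\|u\|_2^2$ from \eqref{min}) is exactly the paper's $\varepsilon'/(1+\varepsilon')$ splitting in the limiting case $\varepsilon'=\beta/\lambda_j$, yielding the same coercivity $\|u\|^2-\int_\Omega a|u|^2\,dx\ge c\|u\|^2$ on $\mathbb{P}_j$. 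The concluding Sobolev-embedding and small-$\rho$ argument likewise matches the paper's $\mathcal J_\gamma(u)\ge K\rho^2-C\rho^{2^*}$.
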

\begin{proof}
Fix $\gamma>0$. By \eqref{f1}, \eqref{f2} and \eqref{ft5}, for any $\varepsilon>0$ there exists $C_\varepsilon>0$ such that
\begin{equation}\label{4.7}
\left|F(x,t)\right|\leq\frac{C_\varepsilon}{2^*}\left|t\right|^{2^*}+\frac{a(x)+\varepsilon}{2}\left|t\right|^2,
\end{equation}
for any $t\in\mathbb{R}$ and a.e. $x\in\Omega$.

Now, let $\beta>0$ be as in Lemma~\ref{lemtec} and $\varepsilon'>0$ be such that $\beta-\varepsilon'\lambda_j>0$. Thus, by \eqref{ft5} and Lemma~\ref{lemtec}, we have
\begin{equation*}
\begin{aligned}
\left\|u\right\|^2-\int_\Omega a(x)\left|u(x)\right|^2 dx
&=\frac{1+\varepsilon'}{1+\varepsilon'}\left(\left\|u\right\|^2-\int_\Omega a(x)\left|u(x)\right|^2 dx\right)\\
&=\frac{\varepsilon'}{1+\varepsilon'}\left\|u\right\|^2+\frac{1}{1+\varepsilon'}\left(\left\|u\right\|^2-\int_\Omega a(x)\left|u(x)\right|^2 dx-\varepsilon'\int_\Omega a(x)\left|u(x)\right|^2 dx\right)\\
&\geq\frac{\varepsilon'}{1+\varepsilon'}\left\|u\right\|^2+\frac{1}{1+\varepsilon'}\left(\beta\left\|u\right\|^2_2-\varepsilon'\int_\Omega a(x)\left|u(x)\right|^2 dx\right)\\
&\geq\frac{\varepsilon'}{1+\varepsilon'}\left\|u\right\|^2+\int_\Omega(\beta-\varepsilon'\lambda_j)\left|u(x)\right|^2 dx\\
&\geq\frac{\varepsilon'}{1+\varepsilon'}\left\|u\right\|^2
\end{aligned}
\end{equation*}
for any $u\in\mathbb{P}_j$.
From this and by \eqref{4.7} we get
\begin{equation*}
\begin{alignedat}2
\mathcal J_\gamma(u)&=\frac 1 2 \left\|u\right\|^2-\frac{\gamma}{2^*}\left\|u\right\|^{2^*}_{2^*}
-\int_\Omega F(x,u(x))\,dx\\
&\geq\frac{1}{2}\left(\left\|u\right\|^2-\int_\Omega a(x)\left|u(x)\right|^2 dx\right)-\frac{1}{2^*}(\gamma+C_\varepsilon)\left\|u\right\|^{2^*}_{2^*}-\frac{\varepsilon}{2}\left\|u\right\|^2_2\\
&\geq\frac{\varepsilon'}{2(1+\varepsilon')}\left\|u\right\|^2-\frac{1}{2^*}(\gamma+C_\varepsilon)\left\|u\right\|^{2^*}_{2^*}-\frac{\varepsilon}{2}\left\|u\right\|^2_2
\end{alignedat}
\end{equation*}
for any $u\in\mathbb{P}_j$.
Thus, by \cite[Lemma~6]{svmountain} and taking $\varepsilon>0$ sufficiently small, there exist constants $K$, $C>0$ such that
\begin{equation}\label{bisci}
\mathcal J_\gamma(u)\geq K\rho^2-C\rho^{2^*}
\end{equation}
for any $u\in\mathbb{P}_j$ with $\left\|u\right\|=\rho$. By taking $\rho>0$ small enough, \eqref{bisci} gives that
$$\mathcal J_\gamma(u)\geq \alpha$$
for a suitable $\alpha>0$, since $2^*>2$.
\end{proof}

\begin{lemma}\label{lem5.2}
Let $f$ satisfy \eqref{ft4}.

Then, for any $\gamma>0$ there exists a constant $M>0$, independent of $\gamma$, such that ${\displaystyle \max_{u\in\mathbb H_k}\mathcal J_\gamma(u)<M}$.
\end{lemma}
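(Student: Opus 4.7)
The plan is to bound $\mathcal J_\gamma$ from above on $\mathbb H_k$ by exploiting two facts that both involve the eigenvalue $\lambda_k$: the variational characterization \eqref{max}, namely $\|u\|^2 \le \lambda_k \|u\|_2^2$ for every $u\in\mathbb H_k$ (since $\mathbb H_k$ is finite-dimensional, this is exactly the Rayleigh quotient bound), and the lower bound \eqref{ft4} on the primitive, namely $F(x,t)\ge \frac{\lambda_k}{2}|t|^2 - B$, which upon integration yields
$$-\int_\Omega F(x,u(x))\,dx \;\le\; -\frac{\lambda_k}{2}\|u\|_2^2 + B|\Omega|.$$

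First I would insert both inequalities into the definition \eqref{Jgamma} of $\mathcal J_\gamma$: for any $u\in\mathbb H_k$ and any $\gamma>0$,
$$\mathcal J_\gamma(u) \;\le\; \frac{\lambda_k}{2}\|u\|_2^2 - \frac{\gamma}{2^*}\|u\|_{2^*}^{2^*} - \frac{\lambda_k}{2}\|u\|_2^2 + B|\Omega| \;=\; B|\Omega| - \frac{\gamma}{2^*}\|u\|_{2^*}^{2^*} \;\le\; B|\Omega|.$$
The quadratic terms cancel exactly, which is the whole point: the choice of the coefficient $\lambda_k$ in \eqref{ft4} has been tailored precisely to match the spectral bound on $\mathbb H_k$. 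The critical term $-\frac{\gamma}{2^*}\|u\|_{2^*}^{2^*}$ is negative and can simply be discarded from above; in particular, it does not introduce any $\gamma$-dependence into the bound.

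Setting $M := B|\Omega| + 1$ then gives a constant, independent of $\gamma>0$, such that $\mathcal J_\gamma(u) < M$ for every $u\in\mathbb H_k$, and hence $\max_{u\in\mathbb H_k}\mathcal J_\gamma(u) < M$, as required. There is essentially no obstacle here: the whole proof is a direct two-line computation once the right ingredients (the eigenvalue characterization \eqref{max} and the hypothesis \eqref{ft4}) are combined. The only thing worth noting is that the argument uses only the lower bound \eqref{ft4} on $F$ and not the weaker condition \eqref{f5}, which is precisely why this lemma needs the stronger assumption required by Theorem~\ref{b}.
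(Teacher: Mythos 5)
Your proof is correct and follows essentially the same route as the paper: both combine the Rayleigh quotient bound $\|u\|^2\le\lambda_k\|u\|_2^2$ on $\mathbb H_k$ from \eqref{max} with the lower bound on $F$ from \eqref{ft4} so that the quadratic terms cancel, discard the nonpositive critical term, and obtain the $\gamma$-independent bound $B|\Omega|$.
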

\begin{proof}
Fix $\gamma>0$.
By \eqref{ft4} and \eqref{max}, for any $u\in\mathbb H_k\setminus\{0\}$ we have
$$\begin{aligned}
\mathcal J_\gamma(u) & \leq\frac{1}{2}\left\|u\right\|^2-\frac{\lambda_k}{2}\left\|u\right\|^2_2-
\frac{\gamma}{2^*}\left\|u\right\|^{2^*}_{2^*}+B\left|\Omega\right|\\
& \leq B\left|\Omega\right|-\frac{\gamma}{2^*}\left\|u\right\|^{2^*}_{2^*}\\
&<B\left|\Omega\right|,
\end{aligned}$$
concluding the proof of Lemma~\ref{lem5.2}.
\end{proof}

\begin{proof}[\bf Proof of Theorem~\ref{b}]
By Lemma~\ref{palais}, Lemma~\ref{lem5.1} and Lemma~\ref{lem5.2}, there is $\gamma^*>0$ sufficiently small such that $\mathcal J_\gamma$ satisfies $(I_2)-(I_4)$ of Theorem~\ref{abs} for any $\gamma\in(0,\gamma^*)$. By recalling that $\mathbb{P}_j=H^\bot_{j-1}$, we get that $\mbox{codim}~\mathbb{P}_j=j-1$. Hence, by Theorem~\ref{abs} we conclude that $\mathcal J_\gamma$ admits $k-j+1$ pairs of non--trivial critical points for any $\gamma\in(0,\gamma^*)$. Then, the proof of Theorem~\ref{b} is complete.
\end{proof}

\begin{remark}
We would like to point out that when $j=1$ we can also replace \eqref{ft4} with \eqref{f5} and Theorem~\ref{b} still holds true. Indeed, we can argue exactly as in the proof of Theorem~\ref{a}, by using Lemma~\ref{lem5.1} $($with $\mathbb{P}_1=X_0^s(\Omega)$$)$ instead of Lemma~\ref{lem4.2}.
\end{remark}

\subsection{Proof of Theorem~\ref{c}}\label{sec c}

We first show that problem~\eqref{P} possesses a non--trivial non--negative solution. For this, it is sufficient to study the following problem
\begin{equation}\label{Ptronc}
\left\{\begin{array}{lll}
$$(-\Delta)^su=\gamma u^{2^*-1}+\widetilde{f}(x,u) & \mbox{in } \Omega$$\\

$$u\geq0 & \mbox{in } \Omega$$\\

$$u=0 & \mbox{in } \mathbb{R}^{n}\setminus\Omega\,,$$
\end{array}
\right.
\end{equation}
where
\begin{equation}\label{ftilde}
\widetilde{f}(x,t)=\begin{cases}
f(x,t) & \mbox{if } t>0\\
0 & \mbox{if } t\leq0.
\end{cases}
\end{equation}
Indeed, a non-trivial solution of \eqref{Ptronc} is a non-trivial non-negative solution of \eqref{P}.

The energy functional associated with \eqref{Ptronc} is given by
\begin{equation}\label{Jtilde}
\widetilde{\mathcal J}_\gamma(u)=\frac 1 2 \left\|u\right\|^2-\frac{\gamma}{2^*}\int_\Omega (u(x))^{2^*}\,dx
-\int_\Omega \widetilde{F}(x,u(x))\,dx,
\end{equation}
where
$$\widetilde{F}(x,t)=\int^t_0\widetilde{f}(x,\tau)d\tau\,.$$

We would observe that the truncated function $\widetilde{f}$ still verifies \eqref{f1}, \eqref{f2}, \eqref{f3} and \eqref{ft5}, while \eqref{ft4} holds true for $\widetilde{f}$ for any $t\geq0$ but not for any $t<0$. This point must be considered for our proof.

Indeed, in order to apply Theorem~\ref{abs2}, we immediately note that $\widetilde{\mathcal J}_\gamma$ still verifies $(I_2)$ by Lemma~\ref{lem5.1} with $\mathbb{P}_1=X_0^s(\Omega)$. In order to prove $\widehat{(I_3)}$ of Theorem~\ref{abs2} we have to proceed as follows.

Let $e_1$ be the eigenfunction of $(-\Delta)^s$ associated to $\lambda_1$. Since $e_1$ is positive by \cite[Corollary~8]{servadeivaldinociRego}, by \eqref{ftilde} it follows that $\widetilde{F}(x,te_1(x))=F(x,te_1(x))$ for any $t>0$ and for a.e. $x\in\Omega$. Thus, we can use \eqref{ft4} and get for any $t>0$
$$\begin{aligned}
\widetilde{\mathcal J}_\gamma(te_1)& = \frac 1 2 \left\|te_1\right\|^2-\frac{\gamma}{2^*}\left\|te_1\right\|^{2^*}_{2^*}
-\int_\Omega\widetilde{F}(x,te_1(x))\,dx\\
& \leq\frac{t^2}{2}\left\|e_1\right\|^2-\frac{t^2}{2}\lambda_1\left\|e_1\right\|^2_2+B\left|\Omega\right|\\
&=B\left|\Omega\right|,
\end{aligned}$$
thanks to the characterization of $e_1$ given in \cite[Proposition~9]{svlinking}. From this, $\widetilde{\mathcal J}_\gamma$ satisfies $\widehat{(I_3)}$ for any $\gamma>0$.

Now it remains to verify $(I_4)$ of Theorem~\ref{abs2}: for this it is enough to argue as in the proof of Lemma~\ref{bound} and Lemma~\ref{palais} (note that for these lemmas we just need assumptions \eqref{f1}, \eqref{f2} and \eqref{f3}).

Finally, all the assumptions of Theorem~\ref{abs2} are satisfied by $\widetilde{\mathcal J}_\gamma$ and so we can conclude that for any $\gamma\in(0,\gamma^*)$, $\widetilde{\mathcal J}_\gamma$ has a non--trivial critical point which is a non--trivial non--negative solution for \eqref{P}. In a similar way, with small modifications, it is possible to prove the existence of a non--trivial non--positive solution for \eqref{P}. This ends the proof of Theorem~\ref{c}.

\end{document}